\documentclass[11pt]{article}
\usepackage[utf8]{inputenc}
\usepackage[T1]{fontenc}
\usepackage{fullpage}
\usepackage{amsmath}
\usepackage{amsthm}
\usepackage{amssymb}
\usepackage{enumerate}
\usepackage{upref}
\RequirePackage{doi}
\usepackage{hyperref}
\hypersetup{colorlinks=true,
            citecolor=green!40!black,
            linkcolor=red!20!black,
            urlcolor=blue!40!black,
            filecolor=cyan!30!black} 

\usepackage{mathtools}
\usepackage{color}
\usepackage{xcolor}
\usepackage{thmtools}
\usepackage{thm-restate}
\usepackage{authblk}
\usepackage[capitalise]{cleveref}

\numberwithin{equation}{section}

\declaretheorem[name=Theorem,numberwithin=section]{thm}

\theoremstyle{definition}
\newtheorem{cor}[equation]{Corollary}
\newtheorem{lem}[equation]{Lemma}
\newtheorem{prop}[equation]{Proposition}
\newtheorem{prob}{Problem}
\newtheorem{obs}[equation]{Observation}
\newtheorem{eg}[equation]{Example}

\crefname{prob}{Problem}{Problems}
\Crefname{prob}{Problem}{Problems}
\crefname{eg}{Example}{Examples}
\Crefname{eg}{Example}{Examples}
\crefname{thm}{Theorem}{Theorems}
\Crefname{thm}{Theorem}{Theorems}

\crefformat{prop}{Proposition~#2\textup{#1}#3}
\Crefname{prop}{Proposition}{Propositions}
\crefname{prop}{proposition}{propositions}
\crefmultiformat{prop}{Propositions~#2\textup{#1}#3}{ and~#2\textup{#1}#3}{, #2\textup{#1}#3}{ and~#2\textup{#1}#3}

\DeclareMathOperator{\Aut}{Aut}

\newcommand{\pref}[1]{\textup(\ref{#1}\textup)}

\newcommand{\fullcref}[2]{\cref{#1}\pref{#1-#2}}

\title{Paths closing to cycles in symmetric graphs}
\title{Closing paths to cycles in symmetric graphs}

\author[1]{Martin Milani\v c}

\author[2]{\DJ or\dj e Mitrovi\' c}

\affil[1]{FAMNIT and IAM, University of Primorska, Koper, Slovenia}

\affil[2]{Department of Mathematics, University of Auckland, Auckland, New Zealand}

\date{}

\begin{document}

\maketitle

\begin{abstract}
It was shown by Beisegel, Chudnovsky, Gurvich, Milanič, and Servatius in 2022 that every induced $2$-edge path in a vertex-transitive graph closes to an induced cycle.
Similar results were obtained for 3-edge paths closing to cycles in edge-transitive graphs, where the cycle can be assumed to be induced if the path is induced. 
Motivated by these results, we consider the following problem: 
For a given class of graphs, determine all integers $\ell\geq 0$ such that for every graph in the class, every path of length at most $\ell$ closes to a cycle. 
We also consider the variant of the problem for induced paths closing to induced cycles.
We completely solve these problems for the classes of (finite) vertex-transitive graphs, edge-transitive graphs, and edge-transitive graphs that are not stars.
For all but one case of a negative answer, we provide infinite families of connected counterexamples.
\end{abstract}

\section{Introduction}

All graphs considered in this paper are finite, simple, and undirected.
 An automorphism of a graph is a permutation of its vertices mapping edges to edges and non-edges to non-edges. 
 The relationship between existence of symmetry (i.e., a nontrivial automorphism) in a graph and its structural properties is one of the main objects of study in algebraic graph theory (see, e.g., \cite{godsil2001algebraic}).
 The general principle is that the existence of symmetry in a graph restricts its structural properties; a trivial example of this is that vertex-transitive graphs are regular, that is, all vertices have the same degree.
    
These structural restrictions also have to do with the existence and relations between subgraphs of the graph in question. 
In this article, the subgraphs we will be particularly interested in are paths and cycles.
There is a number of results on these particular substructures in symmetric graphs already available in the literature.
For example, in {\cite[Theorem~III]{Tutte1947_FamilyOfCubicGraphs}} Tutte shows that the length of a shortest cycle of an $s$-arc-transitive graph with degree at least $3$ is bounded below by $2s-2$ (see also {\cite[Chapter 4]{godsil2001algebraic}}). 
Moreover, if equality is attained, the graph must be bipartite and have diameter $s-1$.

The starting point of our investigation is the $2022$ work \cite{BCGMS2022} by Beisegel, Chudnovsky, Gurvich, Milanič, and Servatius. 
The authors considered different generalisations of the concept of a simplicial vertex (i.e., a vertex in a graph whose neighbourhood forms a clique), including but not limited to \textsl{avoidable vertices}, \textsl{avoidable edges}, \textsl{simplicial paths}, and \textsl{avoidable paths} (we refer the reader to~\cite{BCGMS2022} for formal definitions of these concepts). 
Various existence results for these substructures in general graphs have been obtained (see also~\cite{MR4245221,Gurvich2021Shifting,MR4904889} for generalisations). 
In the final section of their paper, the authors considered the implications of these results for graphs with certain symmetry properties, in particular for vertex-transitive and edge-transitive graphs.

\begin{sloppypar}
For example, the fact that every graph with an edge contains a \textsl{pseudo-avoidable edge} \hbox{({\cite[Corollary 3.2]{BCGMS2022}})} proves the following for edge-transitive graphs.
\end{sloppypar}

\begin{restatable}[{\cite[Theorem~6.2]{BCGMS2022}}]{thm}{ETBCGMS}
\label{ET-3Paths}
Every $3$-edge path in an edge-transitive graph closes to a cycle.
\end{restatable}

Similarly, the fact that every graph containing an edge contains an avoidable vertex ({\cite[Theorem~1.4]{BCGMS2022}}) as well as an avoidable edge ({\cite[Theorem~5.4]{BCGMS2022}}) implies the following results for vertex-transitive and edge-transitive graphs.

\begin{thm}[{\cite[Theorem~6.1]{BCGMS2022}}]\label{BCGMS_VTInd2paths}
Every induced $2$-edge path in a vertex-transitive graph closes to an induced cycle.
\end{thm}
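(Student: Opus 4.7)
The plan is to reduce the theorem to the existence of an \emph{avoidable vertex}, i.e., a vertex $v$ such that every induced $2$-edge path with middle vertex $v$ is contained in some induced cycle. This notion is exactly tailored to the conclusion we want, and the quoted result \cite[Theorem~1.4]{BCGMS2022} guarantees that every graph with at least one edge contains such a vertex. The key leverage provided by vertex-transitivity is then completely elementary: automorphisms map induced paths to induced paths and induced cycles to induced cycles, so the property of being avoidable is preserved by $\Aut(G)$. Hence, if \emph{some} vertex of a vertex-transitive graph is avoidable, then \emph{every} vertex is.

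Concretely, I would proceed as follows. First, if $G$ has no edges, then there are no $2$-edge paths and the statement is vacuous, so assume $G$ has an edge. Apply \cite[Theorem~1.4]{BCGMS2022} to obtain an avoidable vertex $v_0 \in V(G)$. Now let $P = u\text{--}v\text{--}w$ be an arbitrary induced $2$-edge path in $G$. Since $G$ is vertex-transitive, choose an automorphism $\varphi \in \Aut(G)$ with $\varphi(v) = v_0$; then $\varphi(P)$ is an induced $2$-edge path with middle vertex $v_0$, hence lies in some induced cycle $C_0$. Pulling back, $\varphi^{-1}(C_0)$ is an induced cycle containing $P$, as required.

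The only step that even looks nontrivial is invoking the existence of an avoidable vertex, and that is granted to us as a black box from \cite{BCGMS2022}. Everything else is a one-line symmetry argument, so there is no real obstacle; the value of the theorem lies in packaging the avoidable-vertex result into a statement that is uniform over \emph{all} induced $2$-edge paths in the graph, which vertex-transitivity makes automatic.
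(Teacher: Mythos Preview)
Your proposal is correct and is exactly the argument the paper attributes to \cite{BCGMS2022}: the paper itself does not re-prove \cref{BCGMS_VTInd2paths} but notes immediately before stating it that it follows from the existence of an avoidable vertex in every graph with an edge \cite[Theorem~1.4]{BCGMS2022}, combined with vertex-transitivity---precisely your reduction.
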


\begin{restatable}[{\cite[Theorem~6.3]{BCGMS2022}}]{thm}{ETindBCGMS}
\label{IndPtoC_ET3-paths}
Every induced $3$-edge path in an edge-transitive graph closes to an induced cycle.
\end{restatable}

Related questions, for not necessarily symmetric graphs, were studied by Dirac and Thomassen (see~\cite{MR335352}).
For all $\ell\ge 2$, they characterised graphs containing a path of length $\ell$ such that every path of length $\ell$ closes to a cycle of length $\ell+1$.
They showed that up to isomorphism, for every $\ell$ there are at most three such graphs with $\ell+1$ vertices: the complete graph $K_{\ell+1}$, the $(\ell+1)$-vertex cycle, and the balanced complete bipartite graph with parts of size $(\ell+1)/2$ for odd $\ell$.
Furthermore, they showed that in a connected graph with at least three vertices, every path closes to a cycle if and only if the graph is a complete graph, a cycle graph, or a balanced complete bipartite graph.
These are also precisely the graphs in which every path closes to a Hamiltonian cycle (see Chartrand and Kronk~\cite{MR234852}).
Further related results were obtained by Parsons~\cite{MR505821}, Reid and Thomassen~\cite{MR429637}, and Thomassen~\cite{MR349484}.

Motivated by these results, we consider the following questions.

\begin{prob}[PtoC Problem]\label{PtoCProb}
    Given a graph class $\mathcal{G}$, determine all integers $\ell \geq 0$ with the property that for $G\in \mathcal{G}$, every $\ell$-edge path in $G$ closes to a cycle.
\end{prob}

\begin{prob}[Induced PtoC Problem]\label{IndPtoCProb}
    Given a graph class $\mathcal{G}$, determine all integers $\ell \geq 0$ with the property that for $G\in \mathcal{G}$, every induced $\ell$-edge path in $G$ closes to an induced cycle.
\end{prob}

Note that when studying these questions, it is indeed natural to consider connected graphs only, as both paths and cycles are connected subgraphs, and hence, each is contained in exactly one connected component.

We observe that both \cref{PtoCProb,IndPtoCProb} can easily become uninteresting depending on the class~$\mathcal{G}$. 
For example, the class of paths shows that there are no nonnegative integers $\ell$ such that for every sufficiently large connected graph $G$, every $\ell$-edge path in $G$ closes to a cycle.
Of course, these examples are in a sense trivial since they are not $2$-connected. 
However, for $\ell\ge 2$, replacing each internal vertex of an $\ell$-edge path with two adjacent vertices results in a \hbox{$2$-connected} graph that contains an induced $\ell$-edge path that does not close to an induced cycle. 
For $\ell\ge 3$, the graph obtained from the complete graph with $\ell+1$ vertices by deleting one edge is a \hbox{$2$-connected} graph that contains an $\ell$-edge path that does not close to a cycle.

Notably, the above examples are neither vertex- nor edge-transitive. 
In this paper, we study \cref{PtoCProb,IndPtoCProb} for both vertex-transitive and edge-transitive graphs, expanding on the work already done in \cite{BCGMS2022}.

In particular, we prove the following results, which completely resolve \cref{PtoCProb,IndPtoCProb} for the class of connected vertex-transitive graphs, generalising \cref{BCGMS_VTInd2paths}.

\begin{restatable}[PtoC for VT graphs]{thm}{StatePtoCVT}
\label{PtoCVT}
Let $G$ be a connected vertex-transitive graph with $|V(G)|\geq 3$. Then, every path of length at most $4$ in $G$ closes to a cycle.
Moreover, for every integer $\ell \geq 5$ there exist infinitely many connected vertex-transitive graphs each containing a path of length $\ell$ that does not close to any cycle.
\end{restatable}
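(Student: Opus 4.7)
The plan is to split into the positive part ($\ell \leq 4$) and the construction part ($\ell \geq 5$), handling them quite differently.

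\textbf{Paths of length at most four.} I would proceed by cases on $\ell \in \{0,1,2,3,4\}$. First, any connected vertex-transitive graph on at least three vertices is $2$-connected: otherwise, by vertex-transitivity every vertex would be a cut vertex, impossible in a graph on at least two vertices. This disposes of $\ell \in \{0,1\}$ (every vertex and every edge lies on a cycle). For $\ell = 2$, either the endpoints of the $2$-edge path are adjacent, giving a triangle, or the path is induced and \cref{BCGMS_VTInd2paths} applies. For $\ell \in \{3,4\}$, closing $P = u_0 u_1\cdots u_\ell$ to a cycle is equivalent to finding a $u_0$--$u_\ell$ path in $G\setminus\{u_1,\dots,u_{\ell-1}\}$. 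If $u_0$ and $u_\ell$ are adjacent, we close trivially; otherwise we invoke Menger's theorem together with the Mader--Watkins bound $\kappa(G) \geq \lceil 2(d+1)/3\rceil$ (where $d$ is the degree). The case $d=2$ gives a single cycle and is trivial; for $d\geq 3$ we get $\kappa(G)\geq 3$, which handles $\ell = 3$ (at most two of the three internally disjoint $u_0$--$u_3$ paths can contain an internal vertex of $P$); for $d\geq 4$ we get $\kappa(G)\geq 4$, which handles $\ell = 4$.

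The main obstacle is the remaining cubic case at $\ell = 4$. Here my strategy is to take three internally vertex-disjoint $u_0$--$u_4$ paths $Q_1, Q_2, Q_3$ given by Menger. Since $u_0$ and $u_4$ both have degree $3$, the three paths must leave $u_0$ via its three distinct edges and enter $u_4$ via its three distinct edges; in particular one path uses the edge $u_0u_1$ (so contains $u_1$ internally) and one uses the edge $u_3u_4$ (so contains $u_3$ internally). The remaining path $Q_k$ avoids both $u_1$ and $u_3$. The key observation is that $Q_k$ must also avoid $u_2$: the vertex $u_2$ has only three neighbours, namely $u_1$, $u_3$, and some $w$, so a path through $u_2$ would need to use two of the three edges at $u_2$, which is impossible once $u_1$ and $u_3$ are forbidden. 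Hence $Q_k$ combined with $P$ yields a cycle containing~$P$.

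\textbf{Counterexamples for $\ell\geq 5$.} My seed example is the triangular prism $K_3 \square K_2$ with triangles $\{a_0, a_1, a_2\}$ and $\{b_0, b_1, b_2\}$ and spokes $a_i b_i$: the $5$-edge path $a_0, b_0, b_1, a_1, a_2, b_2$ does not close to a cycle because removing its four internal vertices leaves only $a_0$ and $b_2$, which are non-adjacent. To obtain infinitely many vertex-transitive graphs for each $\ell \geq 5$, I would construct explicit families of Cayley graphs of suitable groups (for instance, dihedral groups of varying orders, or circulant Cayley graphs on $\mathbb{Z}_n$ with a carefully chosen small generating set), arranged so that an ``isolating'' path of length $\ell$ around a vertex persists as the group grows. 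For larger $\ell$, one extends the local obstruction by adding further zig-zagging steps whose last vertex has all neighbours already committed to the path.

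The main obstacle in this second part is ensuring that the non-closure property genuinely survives across an \emph{infinite} family. Naive constructions such as taking the Cartesian product of a small bad example with a large vertex-transitive graph fail, because the extra coordinate opens new routes along which the bad path closes up; the generating set of each Cayley graph in the family must therefore be chosen so that the endpoints of the bad path remain trapped behind the small separator formed by the internal vertices, independently of the parameter used to enlarge the graph.
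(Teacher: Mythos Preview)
Your argument for the positive part ($\ell\le 4$) is correct and tracks the paper's proof closely: both routes use the Mader--Watkins bound $\kappa(G)\ge \lceil 2(d+1)/3\rceil$ to reduce to a connectivity argument, and both single out the cubic $\ell=4$ case for special treatment. The paper's execution is a bit leaner---instead of Menger it simply deletes the internal vertices of $P$ (or, in the cubic case, just $u_1$ and $u_3$) and uses connectedness of what remains to find a $u_0$--$u_\ell$ path; the observation that $u_2$ has degree~$1$ in $G\setminus\{u_1,u_3\}$ is exactly your ``only one usable edge at $u_2$'' step. Your detour through \cref{BCGMS_VTInd2paths} for $\ell=2$ is unnecessary (the $2$-connectivity you already established suffices), and in your cubic Menger argument you tacitly assume the path through $u_1$ and the path through $u_3$ are distinct; they need not be, but the argument survives since then \emph{two} of the three Menger paths avoid $\{u_1,u_3\}$ and the degree count at $u_2$ still forces each to avoid $u_2$.

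The genuine gap is in the construction part. The statement asks, for each $\ell\ge 5$, for \emph{infinitely many} connected vertex-transitive graphs with a bad $\ell$-edge path. Your triangular prism is a single graph, and the remainder of your proposal (``Cayley graphs of dihedral groups or circulants with a carefully chosen small generating set'') is a description of what such a family should look like, not a construction. You correctly identify the obstacle---keeping the separator intact as the ambient graph grows---but you do not resolve it. The paper's solution is precisely the circulant idea you gesture at: take $G_n=\mathrm{Circ}(n,\{\pm 1,\pm 2\})$ for large $n$ and the $5$-edge path $(0,\,n{-}2,\,n{-}1,\,1,\,2,\,3)$, which visits all four neighbours of $0$ and ends at a non-neighbour, hence cannot close; appending $4,5,\dots$ extends it to any desired length, so this single family handles every $\ell\ge 5$. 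Without committing to a concrete family of this sort, your proof of the second assertion is incomplete.
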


\begin{restatable}[Induced PtoC for VT graphs]{thm}{StateIndPtoCVT}
\label{IndPtoCVT}
    Let $G$ be a connected vertex-transitive graph with $|V(G)|\geq 3$. 
    Then, every induced path of length at most $2$ in $G$ closes to an induced cycle. 
    Moreover, for every integer $\ell \geq 3$ there exist infinitely many connected vertex-transitive graphs each containing an induced path of length $\ell$ that does not close to an induced cycle.
\end{restatable}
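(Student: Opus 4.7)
My plan is to treat the positive and negative halves of the statement separately. For the positive direction (every induced path of length at most $2$ in $G$ closes to an induced cycle), the case $\ell = 2$ is precisely \cref{BCGMS_VTInd2paths}. For $\ell \in \{0, 1\}$ I would invoke the classical fact (due to Watkins) that every connected vertex-transitive graph on at least three vertices is $2$-connected, so every vertex (resp.\ every edge) of $G$ lies on some cycle. A shortest such cycle is automatically induced: any chord would split it into two strictly shorter cycles, one of which still contains the prescribed vertex or edge, contradicting minimality.

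For the negative direction at $\ell = 3$, I would use the family $\{\overline{C_n}\}_{n \geq 6}$ of cycle complements, each of which is connected and vertex-transitive (the complement of a vertex-transitive graph is vertex-transitive). In $\overline{C_n}$ take the $4$-vertex sequence $0 - 2 - (n-1) - 1$: the three consecutive pairs have $C_n$-distance at least~$2$ and hence are edges of $\overline{C_n}$, whereas the three non-consecutive pairs $\{0, n-1\}$, $\{2, 1\}$, and $\{0, 1\}$ all have $C_n$-distance exactly~$1$ and hence are non-edges of $\overline{C_n}$, so the path is induced. No induced $C_4$ closes the path since $\{1, 0\}$ is a non-edge; and in any induced cycle of length at least~$5$ extending the path, the candidate next vertex $v_4$ must be non-adjacent to $v_0 = 0$ in $\overline{C_n}$, forcing $v_4 \in \{0, 1, n-1\}$---each of which is already on the path, a contradiction.

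For the negative direction at $\ell \geq 4$, I would aim for an analogous parametric family for each $\ell$: a vertex-transitive graph together with a specific induced $\ell$-edge path whose possible extensions run into a rigid-neighbourhood obstruction as above. Natural candidates include suitably chosen circulant graphs $\mathrm{Cay}(\mathbb{Z}_n, S_\ell)$ supporting a canonical zigzag induced path whose endpoint $v_\ell$ has no admissible common neighbour with the other path vertices, Cayley graphs of non-abelian groups (dihedral, generalised-quaternion), generalised Petersen graphs, or vertex-transitivity-preserving products of a base graph carrying the $\ell = 3$ obstruction. The main obstacle is exhibiting such a family uniformly in $\ell$: for $\ell = 3$ the argument exploits that $\overline{C_n}$ has diameter~$2$, forcing the extension into a small $C_n$-ball that is already fully occupied by the path, whereas for larger $\ell$ the analogous ball grows, so the construction must compensate either by using graphs of larger diameter with more restrictive adjacency patterns or by chaining several rigid obstructions inside a product-type construction, with the verification for each $\ell$ following the same $v_{\ell+1}$-style case analysis but with more branches.
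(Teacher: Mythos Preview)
Your positive half is essentially the paper's argument: $\ell=2$ is \cref{BCGMS_VTInd2paths}, and for $\ell\le 1$ you pass through a shortest cycle containing the vertex or edge. (The paper quotes the stronger \cref{VTPtoC_l<=4} rather than bare $2$-connectedness, but the idea is the same.)

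The negative half, however, is incomplete. For $\ell=3$ your $\overline{C_n}$ construction works, but your closing argument has a small slip: in an induced $5$-cycle extending the path, the vertex $v_4$ \emph{is} adjacent to $v_0$, so ``$v_4$ non-adjacent to $0$'' only applies to cycles of length at least $6$. The $5$-cycle case needs its own line (e.g., $v_4$ would have to be non-adjacent to both $2$ and $n-1$, forcing $v_4\in\{1,3\}\cap\{0,n-2\}=\emptyset$ for $n\ge 6$). More seriously, for $\ell\ge 4$ you do not give a proof at all: you list candidate families (circulants, Cayley graphs, products) and describe the shape an argument might take, but no construction is carried out and no verification is given. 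As written, the theorem is not established for $\ell\ge 4$.

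The paper avoids this case-by-case search entirely via a line-graph transfer principle. It first builds, for each $\ell\ge 4$, infinitely many \emph{edge-transitive} graphs $K_n^{\diamond}$ containing an (ordinary) $\ell$-edge path that does not close to any cycle. It then invokes the facts that (i) the line graph of an edge-transitive graph is vertex-transitive, (ii) an $\ell$-edge path in $G$ becomes an \emph{induced} $(\ell-1)$-edge path in $L(G)$, and (iii) if that induced path closed to an induced cycle of length $\ge 4$ in $L(G)$, the cycle would pull back to a cycle in $G$ containing the original path. Thus $\{L(K_n^{\diamond})\}_{n\ge 3}$ supplies, uniformly and with a single argument, infinitely many vertex-transitive graphs with induced $(\ell-1)$-edge paths not closing to induced cycles for every $\ell-1\ge 3$. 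This is exactly the ``missing idea'' your proposal is reaching for: rather than engineering a new rigid-neighbourhood obstruction for each $\ell$, you manufacture the induced-path obstruction once and for all from the non-induced PtoC counterexamples already in hand.
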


As will be discussed in \cref{Section:PtoCET}, \cref{ET-3Paths} already provides a complete solution to \cref{PtoCProb} for the class of connected edge-transitive graphs. 
However, a slightly different result is obtained when one excludes star graphs from this class.

\begin{restatable}[PtoC for non-star ET graphs]{thm}{StatePtoCETnoStars}
\label{PtoCETnoStars}
    Let $G$ be a connected edge-transitive graph that is not isomorphic to a star. Then every path in $G$ of length at most $3$ closes to a cycle. 
    Moreover, for every integer $\ell \geq 4$, there exist infinitely many connected edge-transitive graphs each containing a path of length $\ell$ that does not close to a cycle.
\end{restatable}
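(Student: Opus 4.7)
The proof splits into two directions: (i) the positive assertion that every path of length at most $3$ in such a graph closes to a cycle, and (ii) the negative assertion providing an infinite family of counterexamples for each $\ell \geq 4$.

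\emph{Positive direction.} The case $\ell = 3$ is immediate from \cref{ET-3Paths}. For $\ell \leq 2$, the key preliminary fact is that every vertex of $G$ has degree at least $2$: since the only connected edge-transitive trees are stars (edge-transitivity in a tree forces each edge to be incident to a leaf, whence the tree has at most one internal vertex), the non-star connected edge-transitive graph $G$ is not a tree, hence contains a cycle; then by edge-transitivity every edge of $G$ lies on some cycle, precluding leaves. Any single vertex or edge then trivially lies on a cycle. For a $2$-edge path $P = u-v-w$, I would attempt to extend $P$ at one endpoint to a $3$-edge path by appending a neighbor outside $\{u,v,w\}$ and then invoke \cref{ET-3Paths}; the only obstruction is that both $u$ and $w$ have the other as their unique non-$v$ neighbor, but in that case $u,v,w$ form a triangle containing $P$.

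\emph{Negative direction.} For each $\ell \geq 4$, I would exhibit an infinite family of connected edge-transitive graphs carrying a path of length $\ell$ that fails to close, splitting by the parity of $\ell$. For even $\ell = 2m$, the family is $\{K_{m,n} : n > m\}$. With parts $A$ (of size $m$) and $B$ (of size $n$), I take a path $P$ of length $2m$ alternating sides with both endpoints in $B$, so that the interior $I(P)$ of $P$ contains all of $A$ and $m-1$ vertices of $B$. Then $G - I(P)$ consists solely of vertices of $B$, which are pairwise non-adjacent in $K_{m,n}$, so the endpoints of $P$ lie in different components and $P$ cannot close. For odd $\ell \geq 5$, I would employ an analogous construction in edge-transitive complete multipartite graphs $K_{p,p,\ldots,p}$ with $q \geq 3$ parts of size $p$: a path of length $\ell$ whose interior exhausts $q-1$ of the parts and leaves both endpoints in the same remaining part fails to close for the same reason. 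Infinitely many pairwise non-isomorphic examples for each odd $\ell$ are then supplied by varying the admissible pairs $(p,q)$ and by passing to further edge-transitive variants such as lexicographic blow-ups.

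The main technical obstacle is the odd case: the natural example $K_{2,2,\ldots,2}$ (with $(\ell+1)/2$ parts of size $2$) furnishes only one graph per value of $\ell$, so producing the promised infinite family for each odd $\ell \geq 5$ requires a careful selection of additional edge-transitive graphs admitting a non-closing path of exactly the given length.
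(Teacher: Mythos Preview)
Your positive direction is correct, though it departs from the paper's route. For $\ell\le 2$ the paper invokes Watkins' theorem $\kappa(G)=\delta(G)$ (\cref{ETKappaBound}) together with \cref{PathToCycle_KappaTrick}, whereas you argue directly that $\delta(G)\ge 2$ and then, for $\ell=2$, extend the $2$-edge path to a $3$-edge path and apply \cref{ET-3Paths}. Both arguments are sound; yours avoids the connectivity theorem at the price of a small case analysis, while the paper's is uniform across $\ell\in\{0,1,2\}$.

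The negative direction, however, has a genuine gap for odd $\ell\ge 5$, which you yourself flag. Your $K_{m,n}$ family handles even $\ell$ cleanly, but for a fixed odd $\ell$ the complete multipartite graphs $K_{p,\ldots,p}$ supply only finitely many admissible pairs $(p,q)$ (you need $(q-1)p=\ell-1$ with $p\ge 2$), and the suggested lexicographic blow-ups do not rescue this: blowing up $K_{2,2,2}$ to $K_{2k,2k,2k}$ raises the connectivity to $4k$, so for $k\ge 2$ a path of length $5$ certainly closes to a cycle. You therefore do not exhibit infinitely many counterexamples for any fixed odd $\ell$.

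The paper sidesteps the parity split with a single uniform family $K_n^{\diamond}$ (\cref{DiamondK_n}): replace each edge of $K_n$ by a $4$-cycle. Every $K_n^{\diamond}$ is edge-transitive, and the $4$-edge path $(v_{12},v_1,v_{21},v_2,v_{23})$ passes through both neighbours of its endpoint $v_{12}$ and terminates at a non-neighbour, so it cannot close. For any $\ell\ge 4$ this path extends inside $K_n^{\diamond}$ (for $n$ large enough) to a non-closing path of length exactly $\ell$, and letting $n$ vary then yields infinitely many counterexamples regardless of parity.
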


Similarly, \cref{IndPtoC_ET3-paths} already solves \cref{IndPtoCProb} for connected edge-transitive graphs, as will be discussed in \cref{Section:IndPtoCET}. 
It turns out that star graphs are yet again the sole obstacle to a more general result.

\begin{restatable}[Induced PtoC for non-star ET graphs]{thm}{StateIntPtoCETnoStars}
\label{IntPtoCETnoStars}
    Let $G$ be a connected edge-transitive graph that is not a star. 
    Then, every induced path of length at most $3$ in $G$ closes to an induced cycle. 
    Moreover, for every integer $\ell\geq 4$, there exists a connected edge-transitive graph containing an induced path of length $\ell$ not contained in any induced cycle.
\end{restatable}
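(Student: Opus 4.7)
The statement splits into a positive direction (induced paths of length at most $3$ close to induced cycles in every connected non-star edge-transitive graph) and a negative direction (a counterexample for every $\ell \geq 4$). I would organise the positive direction by the length of the path and then sketch a construction strategy for the negative direction.

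For $\ell = 3$ the claim is exactly \cref{IndPtoC_ET3-paths}, which holds for every connected edge-transitive graph (stars contain no induced $3$-edge path at all, so the non-star hypothesis is irrelevant in this case). For $\ell \in \{0,1\}$ I would first observe that a connected edge-transitive graph that is not a star is $2$-edge-connected: if it had a bridge, then by edge-transitivity every edge would be a bridge, making the graph a tree, and the only edge-transitive trees are stars. In particular every vertex and every edge lies on some cycle, and the shortest cycle through a designated vertex (resp.\ edge) is induced, since any chord would yield a strictly shorter cycle through the same vertex (resp.\ edge), contradicting minimality.

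For $\ell = 2$ I would use the standard classification of connected edge-transitive graphs: any such graph is either vertex-transitive, in which case \cref{BCGMS_VTInd2paths} closes every induced $P_3$ to an induced cycle, or bipartite with the two parts coinciding with the orbits of $\Aut(G)$. In the bipartite subcase, the non-star hypothesis forces both parts to have minimum degree at least~$2$. Given an induced $P_3 = u_0 u_1 u_2$ with $u_0, u_2$ on one part and $u_1$ on the other, I would pick any neighbour $u_3 \neq u_1$ of $u_2$; bipartiteness places $u_3$ on the same part as $u_1$, so $u_3 \nsim u_1$. If $u_3 \nsim u_0$, then $u_0 u_1 u_2 u_3$ is an induced $3$-edge path, which closes to an induced cycle by \cref{IndPtoC_ET3-paths}, and that cycle also contains $P$. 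If $u_3 \sim u_0$, then $u_0 u_1 u_2 u_3$ is already an induced $4$-cycle containing $P$.

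For the negative direction I would exhibit, for each $\ell \geq 4$, a specific connected edge-transitive graph containing an induced $\ell$-edge path that is not contained in any induced cycle. Natural candidates are bipartite edge-transitive graphs of carefully chosen girth, such as incidence graphs of symmetric combinatorial designs or generalised polygons, or suitable biregular Cayley-like constructions; the graph should be designed so that every attempt to close the distinguished induced path is forced through a vertex adjacent to some interior vertex of the path, producing an unavoidable chord. This is the main obstacle: the positive direction is a clean reduction to the two existing results \cref{IndPtoC_ET3-paths,BCGMS_VTInd2paths} together with a bipartite extension argument, whereas producing an edge-transitive counterexample for \emph{every} $\ell \geq 4$ requires an explicit construction of an algebraic or combinatorial family that scales with the path length.
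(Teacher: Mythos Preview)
Your positive direction is correct and essentially matches the paper's argument. The paper also splits $\ell=2$ into the vertex-transitive case (handled by \cref{BCGMS_VTInd2paths}) and the bipartite case (handled by extending to an induced $3$-edge path and invoking \cref{IndPtoC_ET3-paths}); the only cosmetic difference is that for $\ell\in\{0,1\}$ the paper quotes \cref{delta<2ImpliesStar} to get $\delta(G)\ge 2$ and then \cref{PtoCETnoStars} to get a cycle through any edge, whereas you argue via $2$-edge-connectivity directly. Both routes yield the same ``shortest cycle through the edge is induced'' conclusion.

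The genuine gap is in the negative direction. You correctly identify it as the hard part, but you do not actually produce a construction; gesturing at incidence graphs of designs or generalised polygons is not enough, because those families have large girth and it is far from clear that they contain induced paths of the required length that fail to close to an induced cycle (indeed, high girth tends to make it \emph{easier}, not harder, for induced paths to close to induced cycles). The paper's solution is quite different from what you suggest: it passes to \emph{line graphs}. Since the hypercube $Q_n$ is $2$-arc-transitive, its line graph $L(Q_n)$ is edge-transitive, and an explicit $(2n)$-edge path in $Q_n$ visiting all neighbours of $\mathbf{0}$ and ending at a non-neighbour cannot close to any cycle in $Q_n$; via \cref{LineGraphsLemma} this yields an induced $(2n{-}1)$-edge path in $L(Q_n)$ not contained in any induced cycle, and a one-edge extension covers the even lengths $\ge 6$. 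The remaining case $\ell=4$ is handled by the sporadic $13$-vertex example from~\cite{BCGMS2022}. Without some concrete mechanism of this kind, your negative direction is only a plan, not a proof.
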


Our results are summarised in following table.

\begin{table}[!htbp]
    \centering
    \begin{tabular}{c|c|c}
         $G$ & PtoC Problem &  Induced PtoC problem \\
\hline

         Vertex-transitive with $|V(G)|\geq 3$ &  0 $\leq \ell \leq 4$ & $0\leq \ell \leq 2$ \\
         Edge-transitive & $\ell = 3$ & $\ell = 3$ \\
         Edge-transitive and not a star &  $0\leq \ell \leq 3$ & $0\leq \ell \leq 3$
    \end{tabular}
    \caption{Summary of solutions to \cref{PtoCProb,IndPtoCProb} for the classes of vertex-transitive and edge-transitive graphs}
    \label{Table:Summary}
\end{table}

The paper is structured as follows. 
In \cref{Section:Preliminaries}, we recall some standard notions and results used throughout the paper. 
In \cref{Section:PtoCVT,Section:PtoCET}, we consider \cref{PtoCProb} for vertex-transitive and edge-transitive graphs, respectively, while in \cref{Section:IndPtoCVT,Section:IndPtoCET}, we consider \cref{IndPtoCProb} for vertex-transitive and edge-transitive graphs, respectively.
We conclude in \Cref{sec:open} with some open problems.

\section{Preliminaries} \label{Section:Preliminaries}

A graph $G$ is an ordered pair $(V,E)$, where $V = V(G)$ is the vertex-set of $G$ and $E = E(G)$ is the edge-set of $G$.
Given a graph $G$, two vertices $x,y\in V(G)$ are said to be \emph{adjacent}  if $\{x,y\}\in E(G)$. 
Vertices $y\in V(G)$ such that $\{x,y\}\in E(G)$ are called \emph{neighbours} of $x$ in $G$. 
The number of neighbours of $x$ is the \emph{degree} of $x$, denoted by $d(x)$.
The \emph{minimum degree} of a graph $G$ is defined as $\delta(G)\coloneqq \min\{d(x)\colon x\in V(G)\}$.
A graph $G$ is said to be \emph{regular} if all the vertices have the same degree.
If this common degree is $d$, the graph is said be \emph{$d$-regular}.
We denote the $n$-vertex complete graph by $K_n$.
A \emph{graph class} is a collection of graphs closed under isomorphism.

A graph $G$ is said to be \emph{bipartite} if it has a \emph{bipartition}, that is, a partition of $V(G)$ into two sets of pairwise nonadjacent vertices. A \emph{complete bipartite graph} is a bipartite graph with a bipartition $\{V_1,V_2\}$ such that every two vertices $u\in V_1$ and $v\in V_2$ are adjacent (note that there are no other edges).
For two nonnegative integers $m$ and $n$, we denote by $K_{m,n}$ the complete bipartite graph with a bipartition $\{V_1,V_2\}$ such that $|V_1| = m$ and $|V_2|= n$.
For any $n\ge 0$, the complete bipartite graph $K_{1,n}$ is said to be a \emph{star graph} (or simply a \emph{star}).

A \emph{path} $P$ in a graph $G = (V,E)$ is a sequence of pairwise distinct vertices $P=(x_0,x_1,\ldots,x_\ell)$ such that $\{x_i,x_{i+1}\}\in E$ for all $0\leq i\leq \ell-1$. If $P$ contains $\ell$ edges (or equivalently, $\ell+1$ vertices) we call it an \emph{$\ell$-edge path} or a \emph{path of length $\ell$}.
A \emph{cycle} $C$ in $G$ is a sequence of vertices $(x_0,\ldots,x_{\ell-1},x_\ell = x_0)$ with $\ell\ge 3$ such that 
$x_i\neq x_j$ for $0\le i<j\le \ell-1$ and 
$\{x_i,x_{i+1}\}\in E$ for $0\leq i\leq \ell-1$.
Such a cycle $C$ is also referred to as an \emph{$\ell$-cycle} or a \emph{cycle of length $\ell$}.
It will often be convenient to identify a path in a graph $G$ with the subgraph formed by the vertices and the edges of the path, and the same for cycles.
If this subgraph is an induced subgraph of $G$,
we say that the path, respectively cycle, is \emph{induced}. 
We say that a path $P$ in a graph $G$ \emph{closes to a cycle} in $G$ if there exists a cycle $C$ in $G$ such that $P$ is a subgraph of $C$.
Similarly, we say that an induced path $P$ \emph{closes to an induced cycle in $G$} if there exists an induced cycle $C$ such that $P$ is an induced subgraph of $C$.

Given an integer $k\ge 0$, a graph $G$ is said to be \emph{$k$-connected} if $|V|\ge k+1$ and for each set $S\subseteq V(G)$ such that $|S| < k$, the graph $G-S$ is connected.
By $\kappa(G)$ we denote the \emph{(vertex) connectivity} of $G$, that is, the maximum integer $k\ge 0$ such that $G$ is $k$-connected.

\begin{obs}
\label{kappaatmostk}
Let $G$ be a $d$-regular graph and let $v\in V(G)$.
Then $v$ is an isolated vertex in the graph obtained by removing all $d$ neighbours of $v$ in $G$.
In particular, we conclude that $\kappa(G)\leq d$.
\end{obs}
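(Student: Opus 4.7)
The plan is to unpack the two assertions of the observation, each of which is essentially immediate from the definitions.

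First, I would note that since $G$ is $d$-regular, the vertex $v$ has exactly $d$ neighbours. Denote this neighbourhood by $N(v)$, and let $H = G - N(v)$. Every vertex adjacent to $v$ in $G$ lies in $N(v)$ and has therefore been removed in passing from $G$ to $H$, so no vertex of $H$ other than $v$ is adjacent to $v$. Hence $v$ is isolated in $H$, which is the first claim.

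For the bound $\kappa(G) \le d$, I would split into two cases according to $|V(G)|$. If $|V(G)| = d+1$, then $G$ is necessarily the complete graph $K_{d+1}$, and by standard convention $\kappa(K_{d+1}) = d$, so the bound holds with equality. If $|V(G)| \ge d+2$, then $H$ contains $v$ together with at least one further vertex, yet $v$ is isolated in $H$, so $H$ is disconnected. Therefore $N(v)$ is a vertex cut of $G$ of size $d$, which by definition of $\kappa(G)$ gives $\kappa(G) \le d$.

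The main (and only) potential subtlety is the boundary case $|V(G)| = d+1$, where removing $N(v)$ leaves only the single vertex $v$; this graph is trivially connected, so one cannot directly read off a vertex cut from the construction. Handling this by invoking the convention $\kappa(K_n) = n-1$ dispatches the issue with no real work. Beyond that, the observation is an immediate unpacking of the definitions and requires no further machinery.
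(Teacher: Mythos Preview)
Your proposal is correct and matches the paper's reasoning: the paper states this as an observation without a separate proof, the argument being implicit in the statement itself (removing the $d$ neighbours of $v$ isolates $v$). Your careful handling of the boundary case $|V(G)|=d+1$ via $\kappa(K_{d+1})=d$ is consistent with the paper's definition of $\kappa$, which caps connectivity at $|V(G)|-1$.
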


We next recall basic concepts related to symmetries of graphs.

Let $G=(V,E)$ be a graph.
A permutation of vertices $\phi\colon V\rightarrow V$ is called an \emph{automorphism} of $G$ if for all distinct vertices $x,y\in V$, it holds that $\{x,y\}\in E$ if and only if $\{\phi(x),\phi(y)\}\in E$.
The set of all automorphisms of $G$ forms a group under function composition and is be denoted by $\Aut(G)$.
We refer to $\Aut(G)$ as the \emph{automorphism group} of $G$. A graph $G=(V,E)$ is said to be \emph{vertex-transitive} if for every two vertices $x,y\in V$ there exists an automorphism $\phi\in \Aut(G)$ such that $\phi(x) = y$.

\begin{obs}\label{VTimpliesRegular}
Graph automorphisms preserve vertex degrees. In particular, vertex-transitive graphs are regular.
\end{obs}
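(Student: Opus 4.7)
The plan is to separate the observation into its two assertions and handle the second as an immediate consequence of the first. The core work lies in showing that if $\phi \in \Aut(G)$ and $v \in V(G)$, then $d(v) = d(\phi(v))$; once this is in hand, the regularity of vertex-transitive graphs follows in one line by definition.

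For the first assertion, I would fix an automorphism $\phi$ of $G$ and a vertex $v$, and argue that $\phi$ restricts to a bijection between the neighbourhood of $v$ and the neighbourhood of $\phi(v)$. Since $\phi$ is a bijection on $V(G)$, its restriction to any subset is injective, so it suffices to check that $\phi(N(v)) = N(\phi(v))$. The inclusion $\phi(N(v)) \subseteq N(\phi(v))$ uses the edge-preserving direction of the automorphism condition: if $u \in N(v)$ then $\{u,v\} \in E(G)$, hence $\{\phi(u),\phi(v)\} \in E(G)$, which means $\phi(u) \in N(\phi(v))$. For the reverse inclusion, given $w \in N(\phi(v))$, I would set $u = \phi^{-1}(w)$ (which exists because $\phi$ is a permutation) and use the ``non-edges to non-edges'' direction (equivalently, that $\phi^{-1}$ is also an automorphism) to conclude $\{u,v\} \in E(G)$, i.e. $u \in N(v)$ with $\phi(u) = w$. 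Taking cardinalities then yields $d(v) = |N(v)| = |\phi(N(v))| = |N(\phi(v))| = d(\phi(v))$.

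For the second assertion, let $G$ be vertex-transitive and pick any two vertices $x, y \in V(G)$. By vertex-transitivity there is some $\phi \in \Aut(G)$ with $\phi(x) = y$, and the first assertion gives $d(x) = d(\phi(x)) = d(y)$. Since $x, y$ were arbitrary, all vertices have the same degree, so $G$ is regular.

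There is no real obstacle here: both steps follow directly from unpacking the definitions of automorphism and vertex-transitivity, which is exactly why the statement is packaged as an observation rather than a lemma.
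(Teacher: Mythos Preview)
Your argument is correct and is exactly the standard unpacking-of-definitions proof one would expect. The paper itself offers no proof at all for this observation---it is simply stated as a fact---so there is nothing to compare against; your write-up would serve perfectly well as the omitted justification.
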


Let $G = (V,E)$ be a graph and $\phi\in \Aut(G)$ an automorphism of $G$.
The automorphism $\phi$ maps vertices to vertices, and it naturally induces a mapping from edges to edges, $\phi\colon E\to E$, simply by setting $\phi(\{x,y\}) =  \{\phi(x), \phi(y)\}$ for all $x,y\in V$ such that $x$ is adjacent to $y$ in $G$.
It is not difficult to verify that the induced mapping is a permutation of $E$.
A graph $G=(V,E)$ is said to be \emph{edge-transitive} if for every two edges $e,f\in E$ there exists an automorphism $\phi\in \Aut(G)$ such that $\phi(e) = f$. The \emph{line graph} of a graph $G$, denoted $L(G)$, is a graph with vertex set $E(G)$ with two distinct edges $e,f\in E(G)$ being adjacent in $L(G)$ if and only if they share a vertex in $G$.

\begin{obs}
\label{AutLine}
For $\phi\in\Aut(G)$, permutation of $E(G)$ induced by $\phi$ is an automorphism of $L(G)$. In particular, the line graph of an edge-transitive graph is vertex-transitive.
\end{obs}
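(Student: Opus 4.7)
The plan is to verify directly from the definitions that the induced map $\hat\phi\colon E(G)\to E(G)$ defined by $\hat\phi(\{x,y\}) = \{\phi(x),\phi(y)\}$ preserves the adjacency relation of $L(G)$. The paper has already noted that $\hat\phi$ is a permutation of $E(G)$, so what remains is the ``if and only if'' for $L(G)$-adjacency. This is essentially a bookkeeping exercise: two edges $e,f\in E(G)$ are adjacent in $L(G)$ precisely when they share an endpoint in $G$, and sharing an endpoint is preserved under any bijection on $V(G)$.

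First I would take two distinct edges $e = \{x,y\}$ and $f = \{u,v\}$ of $G$, and check the forward direction: if $e$ and $f$ share a vertex in $G$, say $y=u$, then $\hat\phi(e) = \{\phi(x),\phi(y)\}$ and $\hat\phi(f) = \{\phi(y),\phi(v)\}$, so they share the vertex $\phi(y)$ and are adjacent in $L(G)$. For the converse, I would invoke the fact that $\phi^{-1}\in\Aut(G)$ and that the induced map of the inverse is the inverse of the induced map, so the same forward argument applied to $\phi^{-1}$ turns a shared endpoint of $\hat\phi(e),\hat\phi(f)$ back into a shared endpoint of $e,f$. Since $\hat\phi$ is already a bijection on $V(L(G)) = E(G)$ and preserves adjacency in both directions, it lies in $\Aut(L(G))$.

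For the ``in particular'' clause, suppose $G$ is edge-transitive, and let $e,f\in E(G)$ be arbitrary vertices of $L(G)$. By edge-transitivity there exists $\phi\in\Aut(G)$ with $\hat\phi(e) = f$, and by the first part $\hat\phi\in\Aut(L(G))$ maps $e$ to $f$ as a vertex of $L(G)$. Hence $L(G)$ is vertex-transitive. The only step worth being careful about is the converse direction in the first part, since one needs to know that the inverse of an automorphism is an automorphism and that the induced-map construction commutes with inversion — but both are immediate from the definition $\hat\phi(\{x,y\}) = \{\phi(x),\phi(y)\}$. No serious obstacle is anticipated.
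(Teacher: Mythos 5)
Your proposal is correct and is precisely the direct verification the paper intends: the observation is stated without proof, the induced map having already been noted to be a permutation of $E(G)$, and your check that sharing an endpoint is preserved in both directions (using $\phi^{-1}$ for the converse) together with the deduction of vertex-transitivity of $L(G)$ from edge-transitivity of $G$ fills in exactly the omitted bookkeeping. No issues.
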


\begin{thm}[{\cite[Lemma 3.2.1]{godsil2001algebraic}}]
\label{edgetran-notvertrans}
Every edge-transitive graph with minimum degree at least $2$ is vertex-transitive or  bipartite. 
\end{thm}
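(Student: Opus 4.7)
The plan is to analyse the orbits of $\Aut(G)$ acting on $V(G)$ and show that edge-transitivity forces there to be at most two such orbits, with the two possibilities corresponding precisely to the two conclusions of the theorem. Fix any edge $e_0 = \{u_0, v_0\}$ of $G$ (which exists because $\delta(G) \geq 2$), and let $A$ and $B$ denote the orbits of $u_0$ and $v_0$, respectively, under $\Aut(G)$. Since orbits partition $V(G)$, either $A = B$ or $A \cap B = \emptyset$.

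The first step is to establish that $V(G) = A \cup B$. Given any vertex $w \in V(G)$, the hypothesis $\delta(G) \geq 2$ (only $\delta(G)\geq 1$ is really needed here) provides a neighbour $x$ of $w$, so $\{w,x\}$ is an edge. Edge-transitivity then yields $\phi \in \Aut(G)$ with $\phi(e_0) = \{w,x\}$, so $w \in \{\phi(u_0), \phi(v_0)\} \subseteq A \cup B$.

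The second step is to split on whether $A = B$. If $A = B$, then $V(G)$ is a single orbit of $\Aut(G)$, which is by definition exactly the statement that $G$ is vertex-transitive. If $A \neq B$, then $A$ and $B$ are disjoint and $\{A, B\}$ is a partition of $V(G)$; to conclude that $G$ is bipartite with this bipartition, it suffices to show that every edge has one endpoint in $A$ and one in $B$. For an arbitrary edge $\{w,x\}$, edge-transitivity again supplies $\phi \in \Aut(G)$ with $\{w,x\} = \phi(e_0) = \{\phi(u_0), \phi(v_0)\}$, and since $\phi(u_0) \in A$ and $\phi(v_0) \in B$, this is exactly the required cross-edge.

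I do not expect a substantial obstacle: the argument is essentially a direct unpacking of the definitions of orbit and edge-transitivity. The only point that merits care is the mild asymmetry between the two applications of edge-transitivity — first to cover every vertex by $A \cup B$ via the no-isolated-vertices consequence of $\delta(G)\geq 2$, and then to distribute every edge across $A$ and $B$ — together with the routine observation that any two orbits are either equal or disjoint.
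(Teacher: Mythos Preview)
Your argument is correct and is precisely the standard orbit-counting proof. Note, however, that the paper does not actually supply its own proof of this statement: it is quoted from Godsil and Royle's \emph{Algebraic Graph Theory} (Lemma~3.2.1) and used as a black box. Your write-up matches the proof given there, including your parenthetical observation that only $\delta(G)\geq 1$ is needed to cover every vertex by $A\cup B$; the paper states the hypothesis as $\delta(G)\geq 2$ simply because that is the form required downstream.
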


The following lower bound on the connectivity of a vertex-transitive graph was proved independently by Mader~\cite{MR289343} and Watkins~\cite{MR266804}.

\begin{thm}[{\cite[Theorem~3.4.2]{godsil2001algebraic}}]
\label{VTKappaBound}
Let $G$ be a connected $d$-regular vertex-transitive graph.
Then $\kappa(G)\ge \frac{2}{3}(d+1)$.
\end{thm}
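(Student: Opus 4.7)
The plan is to follow the classical Watkins--Mader approach, arguing by contradiction. Suppose $\kappa(G) < \tfrac{2}{3}(d+1)$ and fix a minimum vertex cut $S \subseteq V(G)$ with $|S| = k := \kappa(G)$. Let $A$ be a smallest connected component of $G - S$ and set $B := V(G) \setminus (A \cup S)$.

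First I would establish two basic size bounds. Since $G$ is $d$-regular, every vertex $v \in A$ has all of its $d$ neighbours in $(A \cup S) \setminus \{v\}$, forcing $|A| + |S| \geq d + 1$; the analogous argument on $B$ gives $|B| + |S| \geq d + 1$. Under the standing hypothesis $k < \tfrac{2}{3}(d+1)$ these yield $|A|, |B| \geq d+1-k > k/2$, and by the minimality of $S$ each vertex of $S$ has at least one neighbour in $A$ and at least one in $B$.

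Next I would invoke vertex-transitivity to produce a second minimum cut overlapping the first non-trivially. Pick $u \in S$, $w \in A$, and $\phi \in \Aut(G)$ with $\phi(u) = w$; then $S' := \phi(S)$ is a minimum cut with $w \in A \cap S'$, whose two sides are $A' := \phi(A)$ and $B' := \phi(B)$. This refines $V(G)$ into a nine-cell partition indexed by $\{A,S,B\} \times \{A',S',B'\}$. The key structural observation is that no edge joins $A \cap A'$ to $B \cap B'$ (nor $A \cap B'$ to $B \cap A'$), since such an edge would have to cross both $S$ and $S'$.

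The main obstacle is to turn these non-adjacency constraints into a cut of size strictly less than $k$, contradicting the minimality of $S$. The standard device is a submodular accounting of the four ``hybrid'' cuts of the form $(S \cap A') \cup (S \cap S') \cup (S' \cap A)$ obtained by varying which corner is being separated: each such set separates a corner cell from its diagonal partner whenever both are nonempty, and so has size at least $k$. Summing the corresponding inequalities and comparing to the identity $|S| + |S'| = 2k$ pins down $|S \cap S'|$ and forces several corner cells to vanish. A careful case analysis exploiting the minimality of $|A|$ then collapses $V(G) \setminus (S \cup S')$ onto too small a union of cells, contradicting the lower bounds $|A|, |B| > k/2$ from the first step and completing the proof.
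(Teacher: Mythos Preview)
The paper does not prove this theorem; it simply cites it as a known result from Godsil and Royle's textbook (attributed there to Mader and Watkins). So there is no ``paper's own proof'' to compare against.

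That said, your outline is the classical Mader--Watkins argument and is essentially the proof given in the cited reference. The two-cut submodularity step is the heart of the matter, and you have identified it correctly: from the four inequalities $|(S\cap X')\cup(S\cap S')\cup(S'\cap X)|\ge k$ (one for each nonempty diagonal pair of corners) together with $|S|+|S'|=2k$, one forces the ``corner'' cells to be small enough that, combined with the minimality of $|A|$ and the bound $|A|>k/2$, a contradiction emerges. Your sketch is accurate but compressed at exactly the point where the real bookkeeping happens; if you were to write it out in full, the case analysis on which corner cells are nonempty (and the verification that the corresponding hybrid set really is a cut) is where care is needed. As an outline of the standard proof, it is sound.
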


For edge-transitive graphs, the connectivity can be expressed {\it exactly} in terms of vertex degrees, as follows.

\begin{thm}[Watkins~\cite{MR266804}]
\label{ETKappaBound}
Let $G$ be a connected edge-transitive graph.
Then $\kappa(G)= \delta(G)$.
\end{thm}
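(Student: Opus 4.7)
The inequality $\kappa(G) \leq \delta(G)$ is standard: if $v \in V(G)$ attains the minimum degree, then $N(v)$ separates $v$ from $V(G) \setminus (\{v\} \cup N(v))$, unless this latter set is empty, in which case $G \cong K_{\delta+1}$ and $\kappa(G) = \delta(G)$ anyway. So the substance of the theorem is the reverse inequality $\kappa(G) \geq \delta(G)$.

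I would first dispose of low-degree cases. If $\delta(G) = 0$, connectedness forces $|V(G)| = 1$ and $\kappa(G) = 0 = \delta(G)$. If $\delta(G) = 1$, pick a vertex $v$ of degree $1$ with unique neighbor $w$; for every edge $e \in E(G)$, edge-transitivity provides $\phi \in \Aut(G)$ with $\phi(\{v,w\}) = e$, so $\phi(v)$ is an endpoint of $e$ of degree $1$. Hence every edge of $G$ is incident with a leaf, and a short argument on the longest path of $G$ shows this forces $G \cong K_{1,n}$ for some $n \geq 1$, in which $\kappa(G) = 1 = \delta(G)$.

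Now assume $\delta(G) \geq 2$, so by \cref{edgetran-notvertrans}, $G$ is vertex-transitive, bipartite, or both. Suppose for contradiction that $\kappa := \kappa(G) < \delta := \delta(G)$, and let $S \subseteq V(G)$ be a minimum vertex cut with $|S| = \kappa$. Let $A$ be a smallest component of $G - S$, set $B = V(G) \setminus (S \cup A)$, and note that minimality of $S$ forces every vertex of $S$ to have a neighbor in each component of $G - S$. For any $u \in A$, since $N(u) \subseteq A \cup S$, we have $\delta \leq d(u) \leq (|A| - 1) + \kappa$, whence $|A| \geq \delta - \kappa + 1 \geq 2$; the symmetric bound $|B| \geq 2$ also holds, so both $A$ and $B$ span at least one edge.

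The main obstacle is to extract a contradiction from edge-transitivity, and the strategy I would adopt is a \emph{fragment}-style argument. Since $A$ contains an edge $e$ and there is also an edge $e'$ joining some $s \in S$ to some $a \in A$, edge-transitivity supplies $\phi \in \Aut(G)$ with $\phi(e') = e$; then $\phi(S)$ is a second minimum cut with $\phi(s) \in A$, so the partition of $V(G)$ induced by $\phi(S)$ is genuinely different from that induced by $S$. The aim is to combine pieces of $S$ and $\phi(S)$ — specifically, suitable subsets of $S \cap \phi(S)$, $S \cap \phi(A)$, and $\phi(S) \cap A$ — into a cut of size strictly less than $\kappa$, contradicting the minimality of $|S|$. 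Executing this cleanly is the delicate part and branches along \cref{edgetran-notvertrans}: in the vertex-transitive sub-case, the slack from \cref{VTKappaBound}, namely $\kappa \geq \tfrac{2}{3}(d+1)$, forces $|A|$ to be small enough that the fragment comparison succeeds, while in the bipartite sub-case one exploits biregularity together with the vertex-transitivity of $L(G)$ granted by \cref{AutLine} to transfer the same bookkeeping across the two parts of the bipartition.
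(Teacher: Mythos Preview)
The paper does not prove this theorem; it is quoted from Watkins~\cite{MR266804} and used as a black box. So there is no proof in the paper to compare your proposal against.

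On the proposal itself: the broad strategy---take a minimum cut $S$, look at a smallest fragment $A$, use edge-transitivity to produce a second minimum cut $\phi(S)$ meeting $A$, and extract a smaller cut from the overlap---is indeed the right shape, and is essentially how Watkins and Mader argue. But what you have written is an outline, not a proof: the decisive step, actually assembling a cut of size $<\kappa$ from pieces of $S$ and $\phi(S)$, is asserted and not carried out. More importantly, the case split via \cref{edgetran-notvertrans} is a detour that does not do the work you claim. Invoking \cref{VTKappaBound} in the vertex-transitive sub-case cannot ``force $|A|$ to be small enough'': that theorem gives only $\kappa\ge \tfrac{2}{3}(d+1)$, which is strictly weaker than $\kappa\ge d$, and yields no upper bound on fragment sizes; the gap between $\tfrac{2}{3}(d+1)$ and $d$ is exactly what you are trying to close. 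Likewise, the appeal to $L(G)$ in the bipartite sub-case is too vague to constitute an argument.

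The standard clean route avoids the case split entirely: one shows (by a submodularity/counting argument on two overlapping minimum cuts) that distinct \emph{atoms}---fragments of minimum size---are pairwise disjoint, hence form blocks permuted by $\Aut(G)$. If $\kappa<\delta$ then atoms have size $\ge 2$ and contain edges; edge-transitivity then forces every edge to be an intra-atom edge, contradicting the existence of edges from $S$ to $A$. That is the missing engine in your sketch.
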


The following results, although simple observations, will be some of our main tools when proving results and constructing counterexamples in subsequent sections.

\begin{lem}
\label{PathToCycle_KappaTrick}
Let $G$ be a connected graph with minimum degree at least $2$. 
Then, every path in $G$ of length at most $\kappa(G)$ closes to a cycle.
\end{lem}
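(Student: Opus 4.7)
The plan is a direct Menger-style argument invoking the definition of vertex-connectivity. Let $P = (x_0, x_1, \ldots, x_\ell)$ be a path in $G$ of length $\ell \leq \kappa(G)$, and let $S = \{x_1, \ldots, x_{\ell-1}\}$ be its set of internal vertices, so that $|S| = \ell - 1 < \kappa(G)$. By the very definition of $\kappa(G)$, removing fewer than $\kappa(G)$ vertices cannot disconnect $G$, so the subgraph $G - S$ is connected. In particular, $G - S$ contains a path $Q$ from $x_0$ to $x_\ell$.

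By construction $V(Q) \cap S = \emptyset$, so $P$ and $Q$ share only the two endpoints $x_0$ and $x_\ell$; that is, they are internally vertex-disjoint. Concatenating $P$ with the reverse of $Q$ therefore produces a closed walk in which no vertex repeats except at the start and end, i.e., a cycle of $G$ containing $P$ as a subgraph. This is exactly the conclusion of the lemma.

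The entire argument reduces to a single application of the definition of $\kappa(G)$, so there is essentially no obstacle in the main case $\ell \geq 2$: the resulting cycle has length $\ell + |E(Q)| \geq 2 + 1 = 3$, since $x_0 \neq x_\ell$ forces $|E(Q)| \geq 1$. The only point requiring a little more care is the degenerate small-$\ell$ regime (paths of length $0$ or $1$), where $S$ is empty and the construction must be supplemented to guarantee that $Q$ has length at least $2$; here the hypothesis $\delta(G) \geq 2$ naturally enters, providing enough neighbouring structure at $x_0$ and $x_\ell$ to close the short path into a proper cycle. This minor sideline is the only place where the argument departs from a bare invocation of $\kappa(G)$.
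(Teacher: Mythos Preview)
Your proposal is correct and follows essentially the same approach as the paper: remove the internal vertices of $P$, use $|S| < \kappa(G)$ to find an alternative $x_0$--$x_\ell$ path in $G-S$, and concatenate. The paper handles the degenerate cases $\ell\in\{0,1\}$ by extending the short path to a $2$-edge path via $\delta(G)\ge 2$ and then applying the main argument, which is exactly the ``neighbouring structure'' you allude to.
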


\begin{proof}
Let $P=(v_0,v_1,\ldots,v_{\ell-1},v_\ell)$ be an $\ell$-edge path in $G$ with $2\leq \ell\leq \kappa(G)$. If $v_0 = v_\ell$, then $P$ is already a cycle, so we can assume $v_0\neq v_\ell$. As $\kappa(G)> \ell-1\geq 1$, the graph $G^*\coloneqq G\setminus \{v_1,\ldots,v_{\ell-1}\}$ obtained by deleting the $\ell-1$ vertices $v_1,\ldots,v_{\ell-1}$ from $G$ is connected. 
Hence, there exists a path $P^*=(v_0=w_0,w_1,\ldots,w_k=v_\ell)$ in $G^*$ connecting $v_0$ and $v_\ell$. 
Note that $P^*$ is also a path in $G$. Furthermore, $P$ and $P^*$ share no edges and the only vertices they have in common are $v_0$ and $v_\ell$. 
Since $k\ge 1$ and $\ell\ge 2$, we infer that $P$ and $P^*$ form a cycle $(v_0=w_0,\ldots,w_k = v_\ell,v_{\ell-1},\ldots,v_1,v_0)$ of length $k+\ell\ge 3$ in $G$.
By construction, this cycle contains~$P$.

Since $\delta(G)\ge 2$, every $0$-edge path (i.e., vertex) closes to a $1$-edge path (i.e., edge), which in turn closes to a $2$-edge path.
Hence, by the first part of the proof, every $\ell$-edge path in $G$ with $\ell\in \{0,1\}$ also closes to a cycle.
\end{proof}

\begin{obs}
\label{PathNotInCycleStrategy}
A useful strategy for constructing examples of paths that are not contained in any cycle is the following. 
Choose a vertex $x\in V(G)$. 
Suppose that it is possible to construct a path $P$ starting at $x$ that visits all the neighbours of $x$ in some order, possibly with extra vertices in between, and terminates at a non-neighbour of $x$. 
Then $P$ does not close to any cycle in $G$, as the starting vertex $x$ cannot be reached anymore without revisiting one of its neighbours.
\end{obs}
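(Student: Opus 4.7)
The plan is to verify the observation by contradiction, relying on the rigidity that every vertex of a cycle appears exactly once and that any cycle containing $P$ must return to the starting vertex $x$ via one of its neighbours.

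First, I would assume for contradiction that $P = (x = v_0, v_1, \ldots, v_\ell)$ closes to a cycle $C$ in $G$. By the definition of closing, the path $P$ sits as a consecutive segment of $C$, so $C$ can be written as $(u_0, u_1, \ldots, u_{m-1}, u_0)$ with $u_0, \ldots, u_{m-1}$ pairwise distinct and $u_i = v_i$ for $0 \leq i \leq \ell$. Since $u_0 = x$ is adjacent to $u_{m-1}$ on the cycle, the vertex $u_{m-1}$ is a neighbour of $x$.

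Next, I would invoke the two hypotheses on $P$. The assumption that $v_\ell$ is a non-neighbour of $x$ forces $v_\ell \neq u_{m-1}$ (and also $\ell < m-1$, otherwise $C$ would already contain the non-edge $\{v_\ell, x\}$), so in particular $u_{m-1}$ lies strictly outside the prefix $u_0, \ldots, u_\ell$. On the other hand, the hypothesis that $P$ visits every neighbour of $x$ means $u_{m-1}$ coincides with some $v_i = u_i$ with $0 \leq i \leq \ell$, contradicting the distinctness of the vertices of $C$.

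There is no serious obstacle here; the observation is essentially a pigeonhole-style remark. The only subtlety worth stating cleanly is that the closing edge of any hypothetical cycle at $x$ necessarily consumes a \emph{fresh} neighbour of $x$ not already used as $v_1$, and since $P$ by assumption already contains every neighbour of $x$, no such fresh neighbour exists. The role of the terminus $v_\ell$ being a non-neighbour of $x$ is precisely to rule out the degenerate possibility that $P$ closes directly via the edge $\{v_\ell, v_0\}$ without needing extra vertices.
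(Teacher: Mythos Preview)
Your argument is correct and is precisely the formalisation of the paper's own justification, which is given inline in the statement itself (``the starting vertex $x$ cannot be reached anymore without revisiting one of its neighbours'') rather than as a separate proof. You have simply unpacked that clause carefully: any cycle through $P$ must re-enter $x$ via a neighbour $u_{m-1}$ not yet used, but all neighbours already occur on $P$, and the non-neighbour terminus rules out closing directly.
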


\section{Paths closing to cycles in vertex-transitive graphs}\label{Section:PtoCVT}

In this section, we study \cref{PtoCProb} for connected vertex-transitive graphs. We start by proving the following result for the more general class of regular graphs.

\begin{prop}\label{RegPtoC}
    Let $G$ be a connected $d$-regular graph such that
    \begin{enumerate}[(1)]
        \item \label{RegPtoC-kappa>=4} $\kappa(G)\geq 4$, or
        \item \label{RegPtoC-cubic}$\kappa(G) = d = 3$.
    \end{enumerate}
Then, every path in $G$ of length at most $4$ closes to a cycle.
\end{prop}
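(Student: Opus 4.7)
Part \eqref{RegPtoC-kappa>=4} is immediate from \cref{PathToCycle_KappaTrick}: since $\kappa(G) \ge 4$, every path of length at most $4$ closes to a cycle. For part \eqref{RegPtoC-cubic}, where $\kappa(G)=d=3$, \cref{PathToCycle_KappaTrick} still delivers the result for all paths of length at most $3$, so the only remaining case is paths of length exactly $4$. Let $P=(v_0,v_1,v_2,v_3,v_4)$ be such a path. I would first record the simple equivalence: $P$ closes to a cycle in $G$ if and only if either $v_0$ is adjacent to $v_4$ or $v_0$ and $v_4$ lie in the same connected component of $G-\{v_1,v_2,v_3\}$ (the cycle being $P$ concatenated with an external $v_0$-$v_4$ path, or just the edge $v_0v_4$). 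The plan is to assume neither condition holds and derive a contradiction.

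Under that assumption, $v_0$ and $v_4$ are non-adjacent and $\{v_1,v_2,v_3\}$ is a $v_0$-$v_4$ separator in $G$. Because $G$ is $3$-connected, Menger's theorem guarantees three internally vertex-disjoint paths $P_1,P_2,P_3$ from $v_0$ to $v_4$. Any $v_0$-$v_4$ path must meet the separator $\{v_1,v_2,v_3\}$, and by internal disjointness the three paths meet it in disjoint sets; a counting argument then shows each $P_j$ contains exactly one vertex from $\{v_1,v_2,v_3\}$, so after relabelling we may assume $v_i$ is an internal vertex of $P_i$ for $i\in\{1,2,3\}$.

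The contradiction would come from examining the middle vertex $v_2$ inside $P_2$. As an internal vertex of $P_2$, it must have two neighbours in $V(P_2)$. But $v_2$ has degree only $3$ in $G$, with two of its neighbours already committed to $P$ itself, namely $v_1$ and $v_3$; by internal disjointness $v_1\in V(P_1)$ and $v_3\in V(P_3)$ cannot lie in $V(P_2)$. Thus $v_2$ has at most one neighbour (its unique off-path neighbour) available inside $V(P_2)$, which is too few.

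The main obstacle is precisely this gap between $\kappa(G)=3$ and the desired length $\ell=4$: \cref{PathToCycle_KappaTrick} alone is not strong enough, and one has to exploit the cubic degree condition to squeeze one extra unit out of Menger's theorem. The key insight is that the bottleneck lies at the \emph{middle} vertex $v_2$ of the $4$-edge path, where both path-neighbours $v_1$ and $v_3$ are siphoned off into the other two Menger paths, leaving $v_2$ with insufficient neighbours for its own Menger path.
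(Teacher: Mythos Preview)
Your argument is correct, and the key insight---that the middle vertex $v_2$ has only one neighbour outside $\{v_1,v_3\}$---is exactly what the paper exploits as well. The routes differ, however. You invoke Menger's theorem to produce three internally disjoint $v_0$--$v_4$ paths, pigeonhole the separator $\{v_1,v_2,v_3\}$ across them, and then observe that $v_2$ is starved of neighbours inside its own Menger path. The paper instead deletes only the two vertices $v_1$ and $v_3$: since $\kappa(G)=3$, the graph $G' = G\setminus\{v_1,v_3\}$ remains connected, so it contains a $v_0$--$v_4$ path $P'$; and because $v_2$ has degree~$1$ in $G'$, the path $P'$ cannot pass through $v_2$, so $P'$ avoids all of $v_1,v_2,v_3$ and closes $P$ to a cycle. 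The paper's version is shorter and avoids Menger entirely, while yours makes the role of the three-way separation more explicit and would adapt more readily if one wanted to push the same idea to higher connectivity with several ``forced'' middle vertices.
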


    \begin{proof}
    Suppose first that $\kappa(G)\ge 4$, that is, we are in case \pref{RegPtoC-kappa>=4}.
    Then, \cref{kappaatmostk} implies $k \geq \kappa(G)\geq 4$, and the conclusion follows immediately by \cref{PathToCycle_KappaTrick}.

    Suppose next that $G$ is a connected $3$-regular graph with $\kappa(G)=3$, that is, we are in case \pref{RegPtoC-cubic}.
    Let $P$ be an $\ell$-edge path in $G$ with $0\leq \ell\leq 4$.
    If $\ell \le 3$, then we can again apply \cref{PathToCycle_KappaTrick}.
    Suppose now that $\ell = 4$ and let $P = (v_0,v_1,v_2,v_3,v_4)$.
    Let $G' = G\setminus \{v_1,v_3\}$ be a graph obtained from $G$ by removing vertices $v_1$ and $v_3$. 
     Since $\kappa(G)= 3$, the graph $G'$ is connected. 
     In particular, it contains a path $P'$ from $v_0$ to $v_4$.
     Note that vertex $v_2$ has degree $1$ in $G'$ and therefore $v_2$ cannot belong to $P'$.
     It follows that $P$ and $P'$ form a cycle in $G$ containing $P$.
     This completes the proof.
     \end{proof}

Building on \cref{RegPtoC}, we can obtain the following result for vertex-transitive graphs.

\begin{prop}\label{VTPtoC_l<=4}
    Let $G$ be a connected vertex-transitive graph with $|V(G)|\geq 3$. 
    Then every path in $G$ of length at most $4$ closes to a cycle.
\end{prop}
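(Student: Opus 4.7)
The plan is to reduce to \cref{RegPtoC} by splitting on the common vertex degree $d$ of $G$, which is well-defined by \cref{VTimpliesRegular}. Since $G$ is connected with $|V(G)|\geq 3$, we have $d\geq 2$, so the degenerate cases $d\in\{0,1\}$ can be dismissed at once.

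If $d=2$, then $G$ is isomorphic to a cycle $C_n$ with $n\geq 3$, and every path in $G$ is a subpath of $C_n$ itself, hence already closes to a cycle. If $d=3$, then \cref{VTKappaBound} yields $\kappa(G)\geq \frac{2}{3}(d+1) = \frac{8}{3}$, so $\kappa(G)\geq 3$, while \cref{kappaatmostk} gives $\kappa(G)\leq d = 3$; thus $\kappa(G)=3$ and we are in case \pref{RegPtoC-cubic} of \cref{RegPtoC}. If $d\geq 4$, then \cref{VTKappaBound} gives $\kappa(G)\geq \frac{2}{3}(d+1)\geq \frac{10}{3}$, so $\kappa(G)\geq 4$, placing us in case \pref{RegPtoC-kappa>=4} of \cref{RegPtoC}. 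In both remaining cases, the conclusion follows immediately from \cref{RegPtoC}.

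Essentially all the real work has already been done in \cref{RegPtoC}; here we only need to verify that the hypotheses of one of its two cases are always met for a connected vertex-transitive graph with at least three vertices. The only mild subtlety is the case $d=2$, where \cref{VTKappaBound} alone would give $\kappa(G)\geq 2$, which is insufficient to apply \cref{RegPtoC} directly; this is why it is treated separately by observing that $2$-regular connected graphs are precisely cycles. Since none of the steps involves a difficult argument, no part of the proof is an obstacle.
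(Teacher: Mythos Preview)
Your proof is correct and follows essentially the same approach as the paper: split on the common degree $d$, handle $d=2$ directly as a cycle graph, and for $d\geq 3$ use \cref{VTKappaBound} (together with \cref{kappaatmostk} in the cubic case) to land in one of the two cases of \cref{RegPtoC}. The structure, the case division, and the supporting references all match the paper's argument.
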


\begin{proof}
    Since $G$ is vertex-transitive, it is also $d$-regular for some integer $d\ge 0$. 
    As $|V(G)|\geq 3$, it follows that $G$ is not isomorphic to $K_1$ nor $K_2$, and hence, $d\geq 2$. 
    If $d = 2$, then $G$ is isomorphic to a cycle graph $C_n$ for some $n\ge 3$, for which the claim clearly holds.

    If $d = 3$, then \cref{VTKappaBound} shows that $\kappa(G)\geq \frac{2}{3}(3 + 1) = \frac{8}{3} > 2$. 
    Since $\kappa(G)$ is an integer, it follows that $\kappa(G) \geq 3$. By \cref{kappaatmostk}, we also have that $\kappa(G)\leq d = 3$. 
    In particular, $\kappa(G) = 3$ and the conclusion follows by \fullcref{RegPtoC}{cubic}.

    If $d \geq 4$, by another application of \cref{VTKappaBound}, we have that $\kappa(G)\geq \frac{2}{3}(d+1) \geq \frac{2}{3}(4+1)=\frac{10}{3} > 3$. 
    Hence, $\kappa(G)\geq 4$ and the conclusion follows by \fullcref{RegPtoC}{kappa>=4}. 
\end{proof}

The bound on the length of the path in \cref{RegPtoC,VTPtoC_l<=4} is best possible, as illustrated by the following examples.
First, we consider case \pref{RegPtoC-kappa>=4} of \Cref{RegPtoC}, and \Cref{VTPtoC_l<=4}.

\begin{eg}\label{ctrex-circulant}
For a positive integer $n\ge 3$, let $G_n$ be the graph with vertex-set $\{0,1\ldots,n-1\}$ such that two vertices $u,v\in V(G_n)$ are adjacent if and only if the difference $u-v$ is congruent to $1,2,n-2$ or $n-1$ modulo $n$ (i.e., $G_n$ is the circulant graph $\mathrm{Circ}(2n,\{ \pm 1,\pm 2\})$, see {\cite[Section 1.5]{godsil2001algebraic}}). Note that the cyclic group $\mathbb{Z}_n$ induces automorphisms $t_k\colon x\mapsto (x + k) \pmod{n}$ of $G_n$ with $k\in \mathbb{Z}_n$. In particular, each $G_n$ is vertex-transitive.

Consider the following $5$-edge path:
\[P = (0,2n-2,2n-1,1,2,3)\,.\]
The path $P$ passes through all neighbours of $0$ in $G_n$ and terminates at $3$, a non-neighbour of $0$, making it impossible for it to be contained in a cycle in $G_n$ (cf.~\cref{PathNotInCycleStrategy}).
See \cref{cyclesquare} for an illustration.

\begin{figure}[!htbp]
    \centering
    \includegraphics[width=0.3\linewidth]{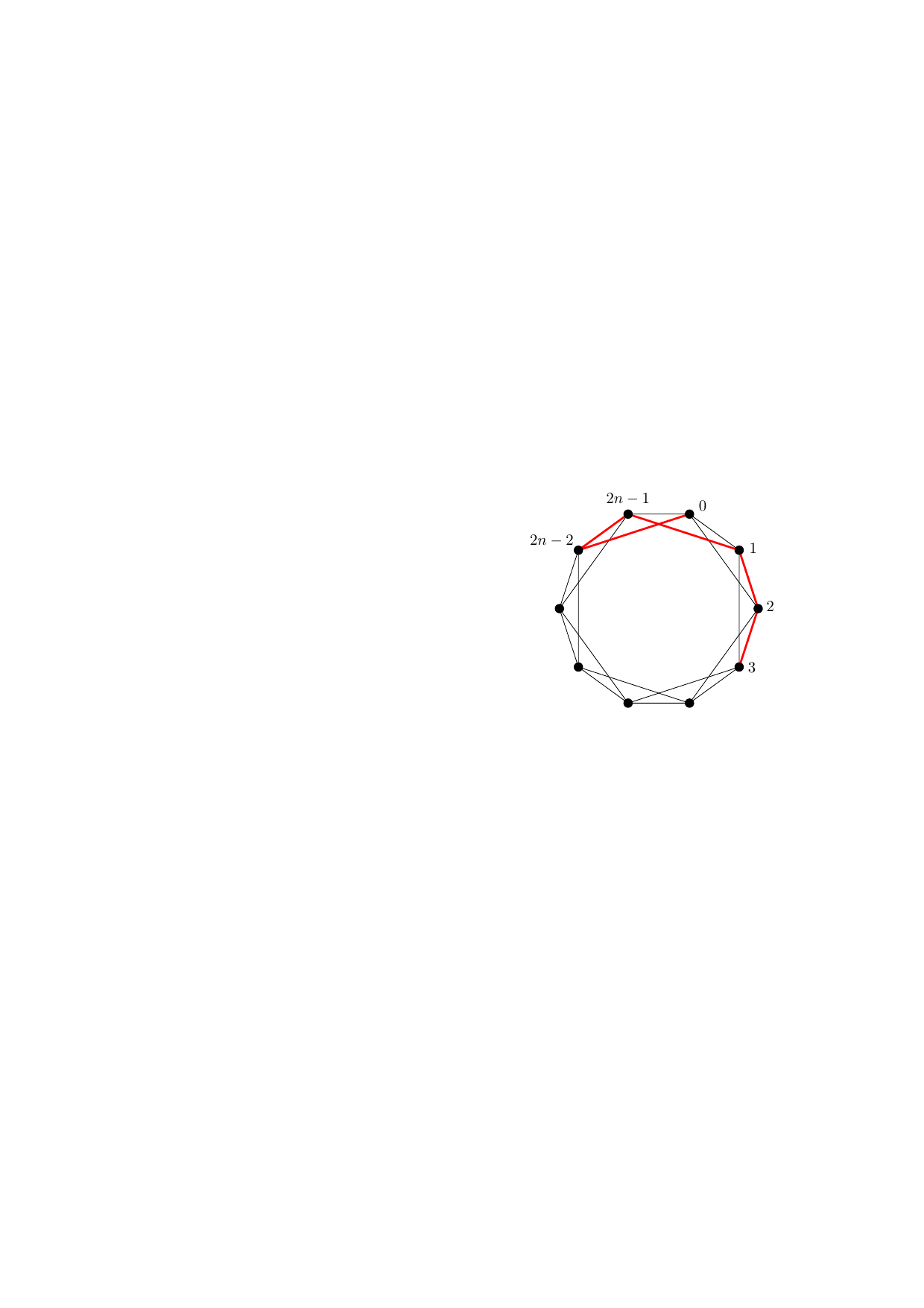}
    \caption{The graph $G_n$ and a $5$-edge path in it that does not close to a cycle}
    \label{cyclesquare}
\end{figure}

Moreover, we observe that for $\ell\geq 6$ and all sufficiently large $n$, the above path in $G_n$ can be extended with vertices $4,5,\ldots, \ell-2$ resulting in an $\ell$-edge path not closing to a cycle.
\hfill$\blacktriangle$
\end{eg}

The results of this section can be summarized as follows.

\StatePtoCVT*

\begin{proof}
Immediate from \cref{VTPtoC_l<=4} and \cref{ctrex-circulant}.
\end{proof}

\section{Paths closing to cycles in edge-transitive graphs}\label{Section:PtoCET}

In this section, we consider \cref{PtoCProb} in the context of edge-transitive graphs. 

Recall the following result.

\ETBCGMS*

It turns out that \cref{ET-3Paths} cannot be extended to neither shorter nor longer paths. For paths of length at most $2$, an obvious obstruction is formed by the star graphs $K_{1,n}$.

\begin{eg}
\label{StarsCounterExample}
Given $n\geq 2$, let $K_{1,n}$ be the star graph with $n$ leaves. 
Note that each of these graphs contains $0$-edge, $1$-edge, and $2$-edge paths. 
However, none of these paths close to a cycle, as $K_{1,n}$ is acyclic.
\hfill$\blacktriangle$
\end{eg}

To show that \cref{ET-3Paths} does not extend to longer paths, we consider the following infinite family of counterexamples.

\begin{eg}
\label{DiamondK_n}
Let $K_n$ denote the complete graph on $n\geq 3$ vertices. 
Assume for convenience that $V(K_n)=\{v_1,\ldots,v_n\}$. 
Let $K_n^{\diamond}$ be the graph obtained by replacing every edge of $K_n$ with a $4$-cycle.

More formally,
\[V(K_n^{\diamond})=\{v_1,\ldots,v_n\}\cup \bigcup_{1\leq i<j\leq n}\{v_{ij},v_{ji}\}\] 
and
\[E(K_n^{\diamond})= \bigcup_{1\leq i<j\leq n}\{\{v_i,v_{ij}\}, \{v_{ij},v_j\}, \{v_j,v_{ji}\}, \{v_{ji},v_i\}\}.\] 

Note that for $1\leq i<j\leq n$, the transposition $(v_{ij},v_{ji})$ (that is, the permutation of $V(K_n^\diamond)$ that interchanges the vertices $v_{ij}$ and $v_{ji}$ and fixes all the others) is an automorphism of $K_n^\diamond$. 
Moreover, each permutation $\alpha$ of the set $\{1,\ldots, n\}$ induces an automorphism $\overline{\alpha}$ of $K_n^\diamond$ obtained by the rule $\overline{\alpha}(v_i) = v_{\alpha(i)}$ for all $i\in \{1,\ldots, n\}$ and $\overline{\alpha}(v_{ij}) = v_{\alpha(i)\alpha(j)}$ for all distinct $i,j\in \{1,\ldots, n\}$.
The above defined automorphisms of $K_n^\diamond$ suffice to conclude that $K_n^\diamond$ is edge-transitive.

Finally, consider the following $4$-edge path in $K_n^{\diamond}$ with $n\geq 3$ arbitrary:
\[(v_{12},v_1,v_{21},v_2,v_{23})\,.\]
Note that the path passes through all of the neighbours of its endpoint $v_{12}$ and terminates at $v_{23}$, a non-neighbour of $v_{12}$.
Therefore, this path does not close to a cycle (cf.~\Cref{PathNotInCycleStrategy}); see \cref{fig:K3K4diamonds} for examples.
Furthermore, for all sufficiently large $n$, this path can be extended to a path of arbitrary length with the same property. 
\begin{figure}[!htbp]
    \centering
    \includegraphics[width=0.8\linewidth]{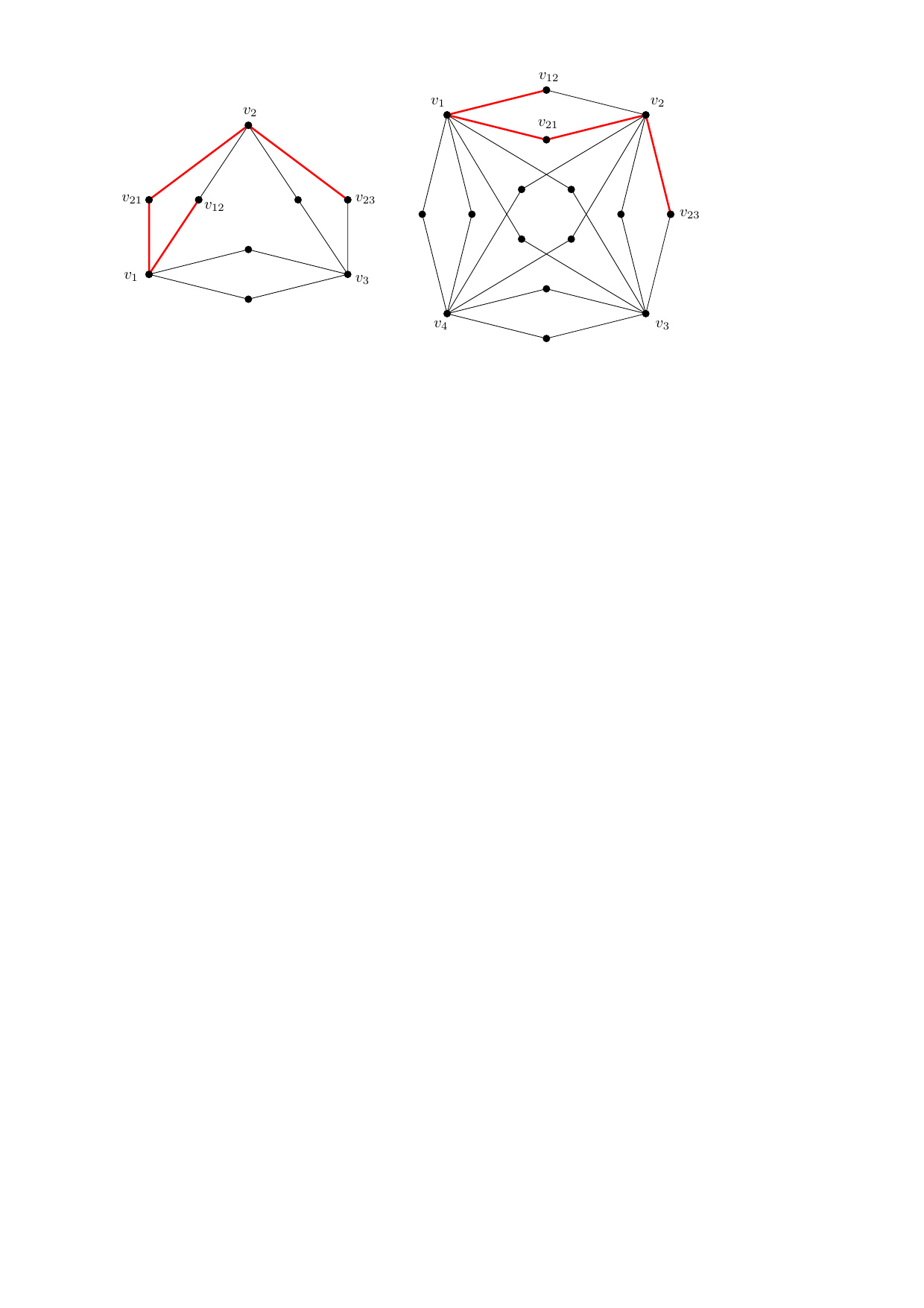}
    \caption{$K_3^\diamond$ and $K_4^\diamond$ with their respective $4$-edge paths not closing to cycles}
    \label{fig:K3K4diamonds}
\end{figure}
\hfill$\blacktriangle$
\end{eg}

\subsection{Excluding star graphs}\label{Section:ETExcludingStars}

As we have seen in \cref{StarsCounterExample}, \cref{ET-3Paths} cannot be extended to shorter paths in edge-transitive graphs due to existence of star graphs $K_{1,n}$. 
It turns out that this trivial obstruction is the only one, and in this section, we resolve \cref{PtoCProb} for the class 
of edge-transitive graphs that are not stars. 
We first make a few easy observations.

\begin{prop}
\label{PathToCycle_ETKappaTrick}
Let $G$ be a connected edge-transitive graph with minimum degree $\delta(G)$ at least~$2$.
Then, every path in $G$ of length at most $\delta(G)$ closes to a cycle.
\end{prop}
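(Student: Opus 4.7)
The plan is to obtain this statement as a direct corollary of two results already recorded in the preliminaries: Watkins' theorem (\cref{ETKappaBound}), which states that $\kappa(G)=\delta(G)$ for every connected edge-transitive graph, and \cref{PathToCycle_KappaTrick}, which guarantees that in any connected graph of minimum degree at least $2$, every path of length at most $\kappa(G)$ closes to a cycle.

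More precisely, I would begin by invoking \cref{ETKappaBound} to rewrite the bound $\delta(G)$ as the connectivity $\kappa(G)$. Since we are assuming $\delta(G)\geq 2$, the hypothesis of \cref{PathToCycle_KappaTrick} is satisfied. Let $P$ be any path in $G$ of length $\ell\leq\delta(G)=\kappa(G)$. Applying \cref{PathToCycle_KappaTrick} to $P$ yields a cycle in $G$ containing $P$, which is exactly what we want.

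There is no real obstacle here: the proof is a one-line composition of the two cited results. The only thing worth double-checking is the chain of inequalities $\ell\leq\delta(G)=\kappa(G)$ (so that \cref{PathToCycle_KappaTrick} really does apply to $P$) and the fact that \cref{PathToCycle_KappaTrick} handles the boundary cases $\ell\in\{0,1\}$ as well, which it does because $\delta(G)\geq 2$ allows one to extend any shorter path to a $2$-edge path before applying the connectivity argument. In particular, the conclusion holds uniformly for all $0\leq\ell\leq\delta(G)$, matching the statement.
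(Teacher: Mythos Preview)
Your proof is correct and follows exactly the same route as the paper's own argument: invoke \cref{ETKappaBound} to get $\kappa(G)=\delta(G)$, then apply \cref{PathToCycle_KappaTrick}. There is nothing to add.
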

\begin{proof}
\cref{ETKappaBound} shows that $\kappa(G)=\delta(G)$. Applying \cref{PathToCycle_KappaTrick} gives the desired result.
\end{proof}

\begin{lem}\label{delta<2ImpliesStar}
Let $G$ be a connected edge-transitive graph with minimum degree at most $1$.
Then, $G$ is a star graph $K_{1,n}$ for some $n\geq 0$.
\end{lem}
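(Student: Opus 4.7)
The plan is to translate the hypothesis $\delta(G)\le 1$, via edge-transitivity, into a strong structural restriction on the edges of $G$, and then exploit connectedness.

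First, I would dispense with the trivial case $\delta(G)=0$: a vertex of degree $0$ together with connectedness forces $G=K_1$, which is $K_{1,0}$. So from now on assume $\delta(G)=1$. Fix a vertex $v$ of degree $1$ and let $e=\{v,u\}$ be its unique incident edge. For any other edge $f\in E(G)$, edge-transitivity supplies an automorphism $\phi\in\Aut(G)$ with $\phi(e)=f$. Since automorphisms preserve vertex degrees, $\{\phi(v),\phi(u)\}=f$ and $\phi(v)$ has degree $1$. This gives the key structural conclusion: \emph{every edge of $G$ has at least one endpoint of degree $1$}.

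Next, let $S=\{x\in V(G):d(x)\ge 2\}$ and $L=\{x\in V(G):d(x)=1\}$ (no vertex has degree $0$ here, by connectedness and $|V(G)|\ge 2$). The property just established says that no edge has both endpoints in $S$, so $S$ is independent and $G$ is bipartite with parts $S$ and $L$. Since each vertex of $L$ has degree $1$, it is adjacent to exactly one vertex of $S$ (or, if $S=\emptyset$, to exactly one vertex of $L$).

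Finally, I would conclude by a short case analysis using connectedness. If $S=\emptyset$ then every vertex has degree $1$, so $G$ is a disjoint union of edges, and connectedness forces $G=K_2=K_{1,1}$. If $S\ne\emptyset$, pick any $u^*\in S$; the vertices of $S\setminus\{u^*\}$ can only be reached from $u^*$ via vertices of $L$, but leaves have degree $1$ and thus cannot lie on any path between distinct $S$-vertices. Hence $|S|=1$, and since every vertex of $L$ is adjacent to the unique element of $S$, we obtain $G=K_{1,|L|}$.

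The argument is largely bookkeeping once the key observation is in place; the only spot that requires a moment's care is ruling out a second high-degree vertex, which is handled by the leaf-degree constraint combined with connectedness. No deeper tool than the definition of edge-transitivity and the degree-preservation property of automorphisms is needed.
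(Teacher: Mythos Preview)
Your argument is correct in substance and uses the same engine as the paper (edge-transitivity plus degree preservation), but the organisation differs slightly. The paper fixes the leaf $v$ and its unique neighbour $u$, and then shows directly that \emph{every} vertex $w\neq u$ has degree $1$: take a path from $w$ to $u$, let $x_1$ be the neighbour of $w$ on that path (so $d(x_1)\ge 2$), and map the edge $\{v,u\}$ onto $\{w,x_1\}$; degree preservation forces $\phi(v)=w$. This identifies the centre $u$ at the outset. You instead prove the global fact that every edge has a leaf endpoint, form the partition $V(G)=S\cup L$, and argue $|S|\le 1$ via connectedness. Both are short and elementary; the paper's version is marginally more direct since it avoids the separate case analysis on $|S|$.

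One point to tighten: when $S\neq\emptyset$ you assert that each vertex of $L$ is adjacent to a vertex of $S$ (and, relatedly, that $\{S,L\}$ is a bipartition). This does not follow from ``every edge has a leaf endpoint'' alone, since two leaves could in principle be adjacent to each other. You need one extra line: an edge between two leaves would form a $K_2$ component, contradicting connectedness once $S\neq\emptyset$ (which forces $|V(G)|\ge 3$). With that sentence added, the proof is complete.
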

\begin{proof}
Suppose first that $\delta(G)=0$.
Then, $G$ has an isolated vertex. 
Since $G$ is connected, it follows that $G$ is isomorphic to $K_1 = K_{1,0}$.

Suppose now that $\delta(G)=1$. 
Then, there exists a vertex $v\in V(G)$ with degree $1$. In particular, $v$ has a unique neighbour in $G$, call it $u$. 
If $G$ contains no vertices other than $v$ and $u$, then $G$ is isomorphic to $K_2 = K_{1,1}$.

Otherwise, let $w\in V(G)$ be an arbitrary vertex of $G$ distinct from $v$ and $u$. As $G$ is connected, it contains a path $P =(w = x_0,x_1,\ldots,x_n=u)$ from $w$ to $u$. 
Since $G$ is also edge-transitive, there exists an automorphism $\phi$ of $G$ such that $\phi(\{v,u\}) = \{w,x_1\}$. 
As automorphisms preserve the degree of a vertex, it is clear that $\phi(v)\neq x_1$, since $x_1$ is of degree at least $2$. 
Hence, $\phi(v) = w$. In particular, the degree of $w$ is equal to the degree of $v$, which is $1$. 
Therefore, we conclude that every vertex of $G$ distinct from $u$ has degree $1$. 
As $G$ is connected, it follows that $G$ is isomorphic to the star graph $K_{1,n}$ (with $u$ as its center), where $n\coloneqq |V(G)\setminus \{u\}| = |V(G)|-1$.
\end{proof}

Finally, we arrive at the following result.

\StatePtoCETnoStars*

\begin{proof}
    Since $G$ is not a star, it follows by \cref{delta<2ImpliesStar} that $\delta(G)\geq 2$. Therefore, the conclusion follows for $\ell$-edge paths with $0\leq \ell \leq 2$ by \cref{PathToCycle_ETKappaTrick} and for $3$-edge paths by \cref{ET-3Paths}. The infinitely many counterexamples for each $\ell \geq 4$ have already been discussed in \cref{DiamondK_n}.
\end{proof}

\section{Induced paths closing to induced cycles in vertex-transitive graphs}\label{Section:IndPtoCVT}

From now on, we focus on \cref{IndPtoCProb}, a variant of \cref{PtoCProb} where both the paths and the cycles under consideration need to be induced subgraphs of the given graph. 
In this section, we resolve \cref{IndPtoCProb} for the class of vertex-transitive graphs.

\begin{prop}\label{IndPToC_VT<=2}
    Let $G$ be a connected vertex-transitive graph with $|V(G)|\geq 3$. 
    Then, every induced path in $G$ of length at most $2$ closes to an induced cycle.
\end{prop}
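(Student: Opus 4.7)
The plan is to dispatch the case $\ell=2$ by direct appeal to \cref{BCGMS_VTInd2paths} and then derive the cases $\ell=0$ (a single vertex) and $\ell=1$ (a single edge) as consequences. As a preliminary, vertex-transitivity with $|V(G)|\geq 3$ forces $G$ to be $d$-regular (\cref{VTimpliesRegular}) with $d\geq 2$, since connectivity rules out $d\in\{0,1\}$ once $|V(G)|\geq 3$.

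For $\ell=0$, I would fix $v\in V(G)$. If $v$ has two non-adjacent neighbours $u,w$, then $(u,v,w)$ is an induced $2$-edge path, and \cref{BCGMS_VTInd2paths} closes it to an induced cycle through $v$. Otherwise $N(v)$ is a clique; by vertex-transitivity this happens at every vertex, which together with connectivity forces $G\cong K_n$ for some $n\geq 3$. Then $v$ lies on any triangle $\{v,u,w\}$ with $u,w$ neighbours of $v$, and such a triangle is automatically an induced $3$-cycle.

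For $\ell=1$, I would fix an edge $e=\{u,v\}$. If some vertex $w\notin\{u,v\}$ is adjacent to exactly one of $u,v$ (say $w\sim v$ and $w\not\sim u$), then $(u,v,w)$ is an induced $2$-edge path containing $e$, and \cref{BCGMS_VTInd2paths} produces an induced cycle through $e$. Otherwise every $w\notin\{u,v\}$ is adjacent to both $u$ and $v$ or to neither, which translates to $N(u)\setminus\{v\}=N(v)\setminus\{u\}$; that is, $u$ and $v$ are true twins. Since $d\geq 2$, this common set is nonempty, so any $w$ in it yields a triangle $\{u,v,w\}$, which is an induced $3$-cycle through $e$.

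There is no real obstacle here: the only nontrivial input is \cref{BCGMS_VTInd2paths}, and the remaining work just pins down the two degenerate configurations (a clique-component at $v$, respectively a true-twin edge) where no induced $2$-edge path through the given vertex or edge exists, and notes that in both situations a triangle supplies the required induced cycle.
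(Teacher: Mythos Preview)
Your argument is correct, but it differs from the paper's. The paper handles $\ell\in\{0,1\}$ by invoking \cref{VTPtoC_l<=4} (which in turn rests on the Mader--Watkins connectivity bound, \cref{VTKappaBound}) to get \emph{some} cycle through the given vertex or edge, and then observes that a shortest such cycle is necessarily induced. Your route instead bootstraps from the $\ell=2$ case: you try to extend the vertex or edge to an induced $2$-edge path and apply \cref{BCGMS_VTInd2paths}, and when this fails you identify the obstruction (a simplicial vertex, respectively a true-twin edge) and exhibit a triangle directly. The paper's argument is shorter and uniform but imports the connectivity machinery behind \cref{VTPtoC_l<=4}; yours is more hands-on but is self-contained modulo \cref{BCGMS_VTInd2paths}, needing only regularity with $d\ge 2$ and no appeal to \cref{VTKappaBound}.
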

\begin{proof}
    By \cref{VTPtoC_l<=4}, it follows that every induced $0$-edge path (i.e., vertex), as well as every induced $1$-edge path (i.e., edge) closes on a cycle.
    As the shortest cycle among all cycles in $G$ that pass through a given vertex or a given edge is necessarily induced, the conclusion follows for induced paths of length at most $1$. The fact that the same holds for induced $2$-edge paths is precisely the statement of \cref{BCGMS_VTInd2paths}.
\end{proof}

It turns out that the bound on the length of induced path in \cref{IndPToC_VT<=2} is best possible. 
In order to prove this, we first recall the following result from the literature.

\begin{lem}[{\cite[Lemma 13]{Gurvich2021Shifting}}]\label{LineGraphsLemma}
Let $G$ be a graph and let $L(G)$ be its line graph. 
Then, the following statements hold.
\begin{enumerate}[(a)]
\item\label{LineGraphsLemma-P} Let $P$ be a path of length $\ell\ge 1$ in $G$ and let $P'$ be the sequence of edges of $P$ along the path. Then $P'$ is an induced path of length $\ell-1$ in $L(G)$.
\item\label{LineGraphsLemma-C} Let $C'$ be an induced cycle of length at least four in $L(G)$. Then, the sequence of vertices of $C'$ along the cycle yields a sequence of edges of $G$ that forms a cycle $C$ in $G$.
\end{enumerate}
\end{lem}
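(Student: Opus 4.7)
The plan is to translate everything through the definition of the line graph: the vertices of $L(G)$ are the edges of $G$, and two such are adjacent in $L(G)$ if and only if they share an endpoint in $G$. Both parts then reduce to careful indexing arguments.

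For part \pref{LineGraphsLemma-P}, I would write $P=(x_0,x_1,\ldots,x_\ell)$ and set $e_i=\{x_i,x_{i+1}\}$ for $0\le i\le \ell-1$. Since the $x_i$ are pairwise distinct, two edges $e_i$ and $e_j$ share a vertex in $G$ precisely when $|i-j|=1$. This is exactly what is needed to conclude that the sequence $(e_0,e_1,\ldots,e_{\ell-1})$ is an induced path of length $\ell-1$ in $L(G)$.

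For part \pref{LineGraphsLemma-C}, let $C'=(f_1,f_2,\ldots,f_k,f_1)$ be an induced cycle in $L(G)$ with $k\ge 4$. I would define $w_i$ to be the common vertex of the consecutive edges $f_{i-1}$ and $f_i$ (indices modulo $k$). The first sub-step is to verify that $w_i$ is well-defined as a \emph{single} vertex: the two edges share at least one vertex by adjacency in $L(G)$, and if they shared both they would coincide, contradicting that the $f_i$'s are distinct. The next sub-step is to show $w_i\ne w_{i+1}$, so that necessarily $f_i=\{w_i,w_{i+1}\}$; otherwise the common point would lie on $f_{i-1}$, $f_i$, and $f_{i+1}$ simultaneously, forcing $f_{i-1}$ and $f_{i+1}$ to be adjacent in $L(G)$, which is impossible in an induced cycle of length at least four.

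The main obstacle is showing that the $w_i$'s are pairwise distinct, which is what turns the closed walk $w_1,w_2,\ldots,w_k,w_1$ into a genuine $k$-cycle in $G$ rather than a degenerate object. The argument I plan to use is: if $w_i=w_j$ for some $i<j$, then $f_i$ and $f_j$ share this vertex, so they must be adjacent in $C'$, forcing $j-i\equiv \pm 1\pmod k$; the case $j=i+1$ has just been excluded, and the wrap-around case $\{i,j\}=\{1,k\}$ would exhibit a shared vertex between $f_1$ and $f_{k-1}$, again contradicting the induced-cycle hypothesis once $k\ge 4$. Once distinctness is secured, $(w_1,\ldots,w_k,w_1)$ is a cycle of length $k$ in $G$ whose edges are exactly $f_1,\ldots,f_k$, as required. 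The condition $k\ge 4$ is essential throughout: three edges of $G$ meeting at a common vertex form a triangle in $L(G)$ without forming any triangle in $G$, so the statement genuinely fails for $k=3$.
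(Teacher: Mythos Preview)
The paper does not prove this lemma itself; it is quoted verbatim from \cite[Lemma~13]{Gurvich2021Shifting} and used as a black box. Your argument is correct and is exactly the natural direct verification through the definition of the line graph, so it supplies the proof the paper omits.

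One minor redundancy worth noting: since you declare that indices are read modulo $k$ when defining $w_i$, your argument for $w_i\neq w_{i+1}$ already applies at $i=k$ and yields $w_k\neq w_1$. Hence the separate ``wrap-around'' case $\{i,j\}=\{1,k\}$ in the pairwise-distinctness step is not actually needed; the only live case there is $j=i+1$, which you have already excluded. This does not affect correctness.
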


We can now prove the following.

\begin{cor}\label{LineGraphCounterExample}
Let $\ell\ge 3$ be an integer and let $G$ be a connected edge-transitive graph containing a path $P$ of length $\ell$ that does not close to a cycle.
Let $L(G)$ be the line graph of $G$ and let $P'$ be the sequence of edges of $P$ along the path.
Then $L(G)$ is a vertex-transitive graph and $P'$ is an induced path of length $\ell-1$ in $L(G)$ that does not close to an induced cycle.
\end{cor}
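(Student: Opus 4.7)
The plan is to verify the three assertions in turn. The vertex-transitivity of $L(G)$ follows at once from \cref{AutLine}: every automorphism of $G$ induces an automorphism of $L(G)$ on $V(L(G)) = E(G)$, so edge-transitivity of $G$ translates into vertex-transitivity of $L(G)$. The fact that $P'$ is an induced path of length $\ell-1$ in $L(G)$ is exactly \fullcref{LineGraphsLemma}{P} applied to $P$.

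The substantive step is to show that $P'$ does not close to an induced cycle in $L(G)$, and I would argue by contradiction. Suppose $P'$ closes to an induced cycle $C'$ in $L(G)$. Before invoking \fullcref{LineGraphsLemma}{C}, I need to verify that $C'$ has length at least $4$. Since $P'$ is an induced path of length $\ell - 1 \geq 2$, its two endpoints are non-adjacent in $L(G)$, so $C'$ cannot close by simply joining the two endpoints of $P'$ with a single edge. Hence $C'$ must contain at least one vertex outside $P'$, which forces its length to be at least $\ell + 1 \geq 4$.

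With the length bound in hand, \fullcref{LineGraphsLemma}{C} applies to $C'$ and yields a cycle $C$ in $G$ whose edges, listed in cyclic order, are precisely the vertices of $C'$ in cyclic order. Because $P'$ appears as a consecutive subsequence of vertices along $C'$, the edges of $P$ appear consecutively along $C$, and so $P$ is a subpath of $C$; in particular, $P$ closes to the cycle $C$ in $G$, contradicting the hypothesis. The main thing to be careful about is precisely this length bound on $C'$: without it, one could imagine $C'$ being a triangle in $L(G)$ arising from three pairwise-adjacent edges of $G$ meeting at a common vertex, for which \fullcref{LineGraphsLemma}{C} would not produce a cycle in $G$ at all.
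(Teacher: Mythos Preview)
Your proof is correct and follows essentially the same route as the paper's: vertex-transitivity of $L(G)$ via \cref{AutLine}, the induced-path statement via \fullcref{LineGraphsLemma}{P}, and then a contradiction by showing any induced cycle $C'$ through $P'$ has length at least four so that \fullcref{LineGraphsLemma}{C} produces a cycle in $G$ containing $P$. Your justification of the length bound and of why $P$ sits inside $C$ is slightly more explicit than the paper's, but the argument is the same.
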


\begin{proof}
We have already remarked in \cref{AutLine} that, if $G$ is edge-transitive, then $L(G)$ is vertex-transitive. 
That $P'$ is an induced $(\ell-1)$-edge path in $L(G)$ follows from \fullcref{LineGraphsLemma}{P}. Assume that $P'$ lies on an induced cycle $C'$ in $L(G)$. As $P'$ is of length $\ell - 1 \geq 2$ and induced in $L(G)$, it follows that $C'$ is of length at least four. 
By \fullcref{LineGraphsLemma}{C}, the sequence of vertices of $C'$ along the cycle yields a sequence of edges of $G$ forming a cycle $C$, which contains $P$ (since $C'$ contains $P'$). 
This is a contradiction, as it was assumed that $P$ does not close to a cycle. 
Hence, $P'$ does not close to an induced cycle in $L(G)$.
\end{proof}

\cref{LineGraphCounterExample} allows us to extend examples of edge-transitive graphs with paths not closing to cycles (such as those considered in \cref{Section:PtoCET}) to vertex-transitive graphs with induced paths not closing to induced cycles.

\begin{eg}\label{ctrex-line-diamond}
Let $K_n^{\diamond}$ denote the graph defined in \cref{DiamondK_n}, obtained by replacing every edge of the complete graph $K_n$ with a $4$-cycle. 
Recall that all members of this infinite family are edge-transitive and that for any $\ell \geq 4$ and sufficiently large $n$, $K_n^{\diamond}$ contains an $\ell$-edge path not closing to a cycle. 
By \cref{LineGraphCounterExample}, $\{L(K_n^{\diamond})\}_{n\geq 3}$ is an infinite family of vertex-transitive graphs with the property that for a given integer $\ell\geq 3$, infinitely many graphs $L(K_n^{\diamond})$ contain an induced $\ell$-edge path not closing to an induced cycle in $L(K_n^{\diamond})$.

In \cref{fig:K3diamondandlinegraph}, the graph $K_3^\diamond$ is depicted on the left, with an example of a path not closing to a cycle depicted in red. 
On the right is its line graph, with an induced path not closing to an induced cycle depicted in blue with red vertices corresponding to edges of the red path in $K_3^\diamond$.

\begin{figure}[!htbp]
    \centering
    \includegraphics[width=0.85\linewidth]{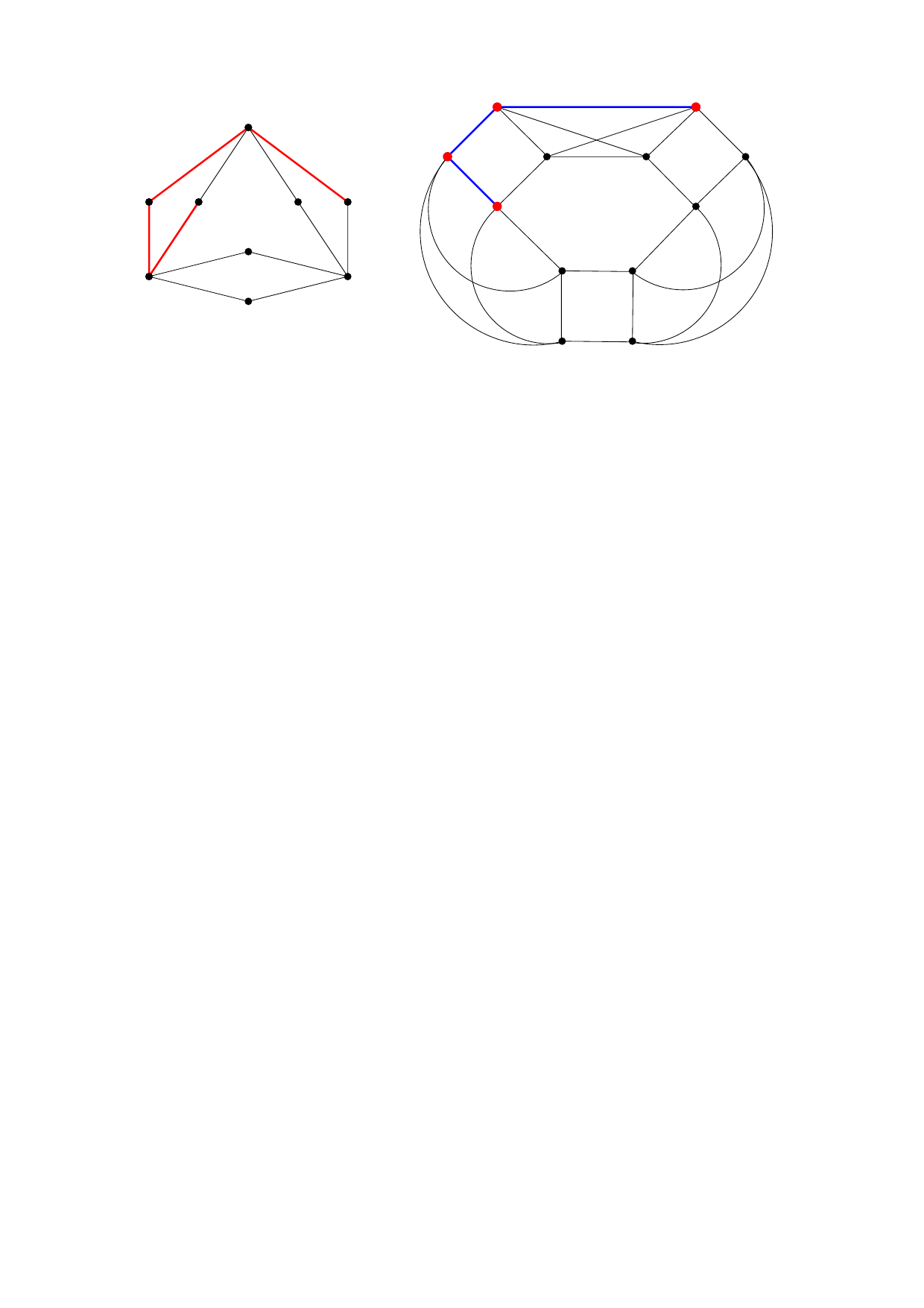}
    \caption{$K_3^\diamond$ and $L(K_3^\diamond)$ and induced paths not closing to any induced cycle}
    \label{fig:K3diamondandlinegraph}
\end{figure}
\hfill$\blacktriangle$
\end{eg}

The main result of this section can be summarized as follows.

\StateIndPtoCVT*

\begin{proof}
Immediate from \cref{IndPToC_VT<=2} and \cref{ctrex-line-diamond}.
\end{proof}

\section{Induced paths closing to induced cycles in edge-transitive graphs}\label{Section:IndPtoCET}

In this section, we first consider \cref{IndPtoCProb} for the class of edge-transitive graphs. Our starting point is \cref{IndPtoC_ET3-paths}, which we restate for convenience.

\ETindBCGMS*

Analogously to our analysis from \cref{Section:PtoCET}, we observe that star graphs (see \cref{StarsCounterExample}) are an obstruction to extending \cref{IndPtoC_ET3-paths} to induced paths of length $\ell\in \{0,1,2\}$ (note that all paths in every star graph are induced). It has already been observed in \cite{BCGMS2022} that \cref{IndPtoC_ET3-paths} does not hold for induced $4$-edge paths in edge-transitive graphs. 
We revisit their example.

\begin{eg}[{\cite[Example 1.1]{BCGMS2022}}]\label{13vCounterExample}
    The $13$-vertex graph displayed in \Cref{circulant} is edge-transitive; however, it contains an induced $4$-edge path (highlighted in red in the figure) that does not close to an induced cycle.

\begin{figure}[!htbp]
    \centering
    \includegraphics[width=0.3\linewidth]{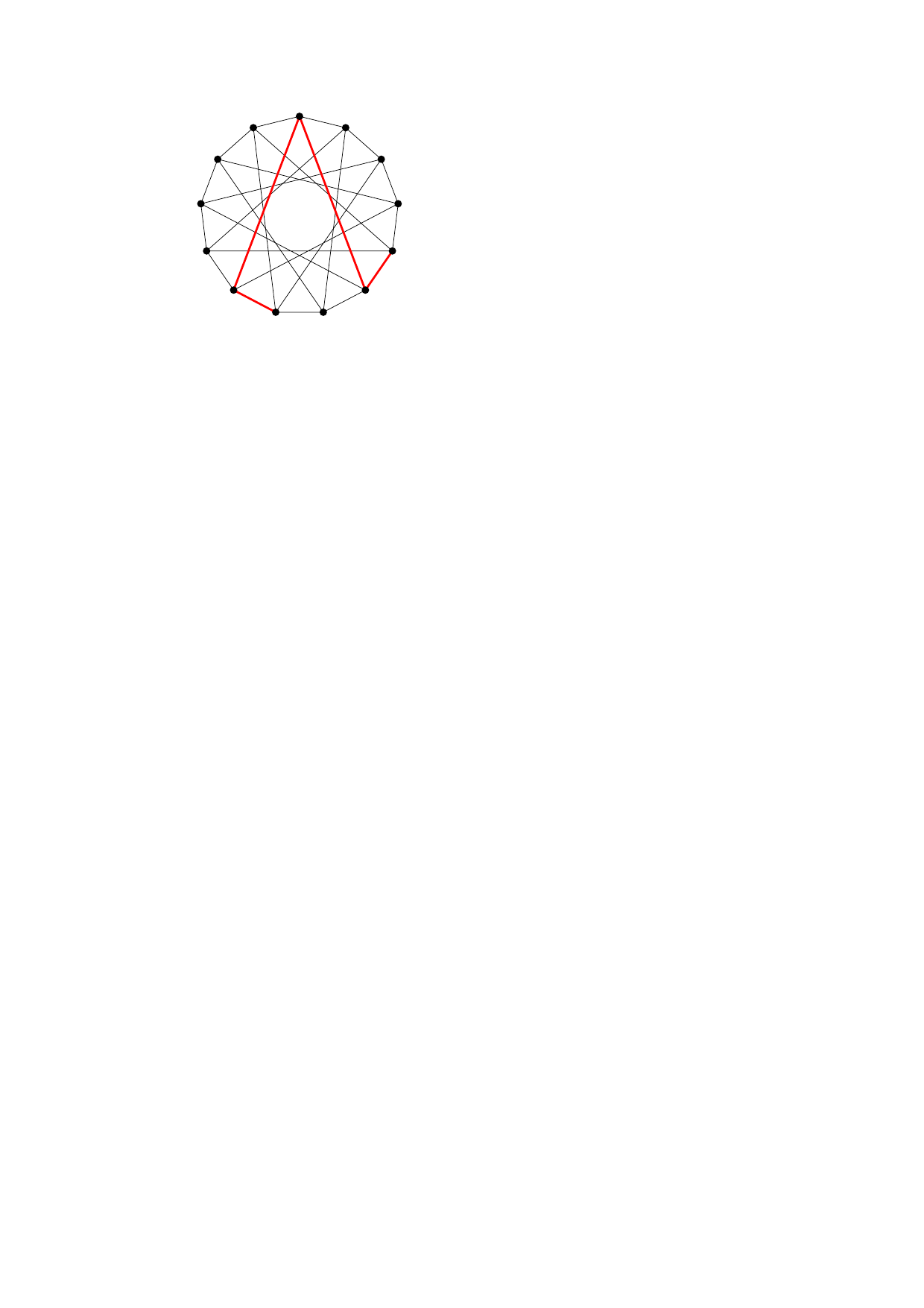}
    \caption{A $13$-vertex edge-transitive graph.}
    \label{circulant}
\end{figure}
\end{eg}

It turns out that \cref{IndPtoC_ET3-paths} cannot be extended to induced $\ell$-edge paths with $\ell \geq 5$ in edge-transitive graphs either.

\begin{eg}\label{QnCounterExample}
Let $n\geq 3$ be a positive integer and denote by $Q_n$ the $n$-dimensional hypercube graph, that is, the graph with vertex set $\{0,1\}^n$ in which two distinct vertices $x,y\in \{0,1\}^n$ are adjacent if and only if there exists a unique $i\in \{1,\ldots, n\}$ such that $x_i\neq y_i$. For $i\in \{1,\ldots, n\}$, let $e_i$ be the binary vector of length $n$ with only the $i^{\text{th}}$ coordinate non-zero.

The line graph $L(Q_n)$ of $Q_n$ is both vertex-transitive and edge-transitive, which follows from the fact that $Q_n$ is $2$-arc-transitive (see {\cite[p. 59]{godsil2001algebraic}}).

We consider the following path $P$ in $Q_n$.
\begin{equation*}
    \begin{split}
      P = (0,e_1,e_1+e_2,e_2,e_2+e_3,e_3,\ldots,e_{n-1}+e_n,e_n,e_n+e_1).
    \end{split}
\end{equation*}

The length of $P$ is $1+2\cdot(n-1)+1=2n$. Note that $P$ visits all neighbours the $0$ vertex (i.e., vertices $e_1$, $\ldots$, $e_n$) and terminates at $e_n+e_1$, a non-neighbour of $0$. By \cref{PathNotInCycleStrategy}, $P$ does not close to a cycle in $Q_n$. Hence, \cref{LineGraphCounterExample} implies that $L(Q_n)$ contains an induced $(2n-1)$-edge path $P'$, such that $P'$ does not close to an induced cycle in $L(Q_n)$. As $L(Q_n)$ is edge-transitive, by letting $n$ vary, we observe that \cref{IndPtoC_ET3-paths} is false for induced $\ell$-edge paths with $\ell \geq 5$ odd.

Note that we can augment the path $P$ with an additional edge, say $\{e_1+e_n,e_1+e_2+e_n\}$ obtaining a path $P^*$ that also does not close to a cycle in $Q_n$. 
By another application of \cref{LineGraphCounterExample}, we observe that $L(Q_n)$ also contains an induced $2n$-edge path that does not close to an induced cycle. 
Hence, by letting $n$ vary, we conclude that \cref{IndPtoC_ET3-paths} is also false for induced $\ell$-edge paths with $\ell \geq 6$ even.
\hfill$\blacktriangle$
\end{eg}

\subsection{Excluding star graphs}\label{Section:IndPtoCETExcludingStars}

As discussed previously, star graphs pose an obstruction to extending \cref{IndPtoC_ET3-paths} to shorter induced paths in edge-transitive graphs. As it turns out, quite similarly to \cref{PtoCETnoStars}, this trivial obstruction is the only one.

\begin{prop}\label{ETNotAStarInd<=2}
Let $G$ be a connected edge-transitive graph that is not a star. 
Then every induced path of length at most $3$ in $G$ closes to an induced cycle.
\end{prop}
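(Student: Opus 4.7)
The plan is as follows. Since $G$ is connected, edge-transitive, and not a star, \cref{delta<2ImpliesStar} gives $\delta(G) \ge 2$, and then \cref{ETKappaBound} yields $\kappa(G) = \delta(G) \ge 2$. The case $\ell = 3$ is exactly \cref{IndPtoC_ET3-paths}, so only $\ell \in \{0, 1, 2\}$ require attention. For $\ell \in \{0, 1\}$, \cref{PathToCycle_KappaTrick} produces a cycle through the given vertex or edge, and a shortest such cycle must be induced: any chord would partition it into two strictly shorter cycles, one of which still contains the prescribed vertex or edge, contradicting minimality.

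The substantive case is $\ell = 2$. Here my plan is to invoke \cref{edgetran-notvertrans}, which forces $G$ to be either vertex-transitive or bipartite. When $G$ is vertex-transitive, the conclusion is exactly \cref{BCGMS_VTInd2paths}, so the work concentrates on the bipartite case. Assume then that $G$ is bipartite with bipartition $\{A, B\}$, and let $P = (x, y, z)$ be an induced $2$-edge path with $x, z \in A$ and $y \in B$. Since $d(z) \ge 2$, the set $N(z) \setminus \{y\}$ is nonempty, and I would split into two subcases. If some $w \in N(z) \setminus \{y\}$ satisfies $w \notin N(x)$, then $(x, y, z, w)$ is an induced $3$-edge path: one has $w \ne x$ since otherwise $x \sim z$ would contradict $P$ being induced, the non-adjacencies $\{x, z\}$ (from $P$ induced) and $\{x, w\}$ (by the choice of $w$) hold, and the remaining potential chord $\{y, w\}$ is excluded because both $y$ and $w$ lie in $B$. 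Applying \cref{IndPtoC_ET3-paths} then closes $(x, y, z, w)$ to an induced cycle, which contains $P$ as an induced subpath. Otherwise $N(z) \setminus \{y\} \subseteq N(x)$, and any $w \in N(z) \setminus \{y\}$ yields the $4$-cycle $(x, y, z, w, x)$, whose only possible chords $\{x, z\}$ and $\{y, w\}$ are excluded by $P$ being induced and by bipartiteness, respectively, producing an induced $4$-cycle through $P$.

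The one step that does not reduce to a black-box use of an existing theorem is the bipartite $\ell = 2$ subcase above, which I expect to be the main obstacle. That said, the bipartite structure rules out precisely the chords that would otherwise cause trouble, so the extension-or-$4$-cycle dichotomy should carry the argument through cleanly.
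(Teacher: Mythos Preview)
Your proposal is correct and follows essentially the same approach as the paper: both reduce $\ell=3$ to \cref{IndPtoC_ET3-paths}, handle $\ell\in\{0,1\}$ by taking a shortest cycle through the vertex or edge, and for $\ell=2$ split via \cref{edgetran-notvertrans} into the vertex-transitive case (handled by \cref{BCGMS_VTInd2paths}) and the bipartite case, where an extra neighbour either completes an induced $4$-cycle or extends $P$ to an induced $3$-edge path to which \cref{IndPtoC_ET3-paths} applies. The only cosmetic difference is that you extend $P$ at the $z$-end while the paper extends at the $x$-end.
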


\begin{proof}

Since $G$ is assumed not to be a star, it follows by \cref{delta<2ImpliesStar} that the minimum degree $\delta(G)$ of $G$ is at least $2$. 
In particular, every induced $0$-edge path (i.e., vertex) closes to an induced $1$-edge path (i.e., edge). 
By \cref{PtoCETnoStars}, every $1$-edge path closes to a cycle.
A cycle of minimum length through a given edge must be induced, establishing the claim for (induced) paths of length $0$ or $1$.

We now prove the claim for induced $2$-edge paths. Let $P=(x,y,z)$ be an induced $2$-edge path in $G$. Note that since $P$ is induced, it follows that $x$ and $z$ are nonadjacent.

If $G$ is vertex-transitive, the conclusion follows by \cref{BCGMS_VTInd2paths}. 
Hence, we can assume that $G$ is not vertex-transitive. \cref{edgetran-notvertrans} now tells us that $G$ is bipartite. 
Since the minimum degree of $G$ is at least $2$, the vertex $x$ has a neighbour $x' \in V(G)\setminus V(P)$. 
Note that $x'$ is not adjacent to $y$, as in this case $(x',x,y,x')$ would be a $3$-cycle in $G$, a contradiction with $G$ being bipartite. 
If $x'$ is adjacent to $z$, $(x',x,y,z,x')$ is an induced $4$-cycle containing $P$, and we are done.

The final case to consider is when $x'$ is not adjacent to $z$. 
It follows that the $3$-edge path $P'=(x',x,y,z)$ containing $P$ is an induced path in $G$. 
By \cref{IndPtoC_ET3-paths}, the path $P'$ closes to an induced cycle.
This induced cycle also contains $P$.

Finally, the fact that this claim holds for $3$-induced paths is precisely the statement of \cref{IndPtoC_ET3-paths}.
\end{proof}

The results of this subsection can be summarised as follows.

\StateIntPtoCETnoStars*

\begin{proof}
    Immediate from \cref{ETNotAStarInd<=2} and \cref{QnCounterExample}.
\end{proof}

\section{Open problems}\label{sec:open}

We conclude with a few open problems.

\cref{RegPtoC} can be reformulated as stating that the solution sets to the PtoC problem (i.e., \cref{PtoCProb}) for the class of $d$-regular graphs with connectivity at least $4$, as well as the class \hbox{of~$3$-regular} graphs with connectivity $3$, both include integers $\ell$ with $0\leq\ell\leq 4$. 
As shown by the examples depicted in \cref{fig:24,fig:Ladder}, the connectivity must be large enough for the statement to hold.

\begin{figure}[!htbp]
    \centering
    \includegraphics[width=0.4\linewidth]{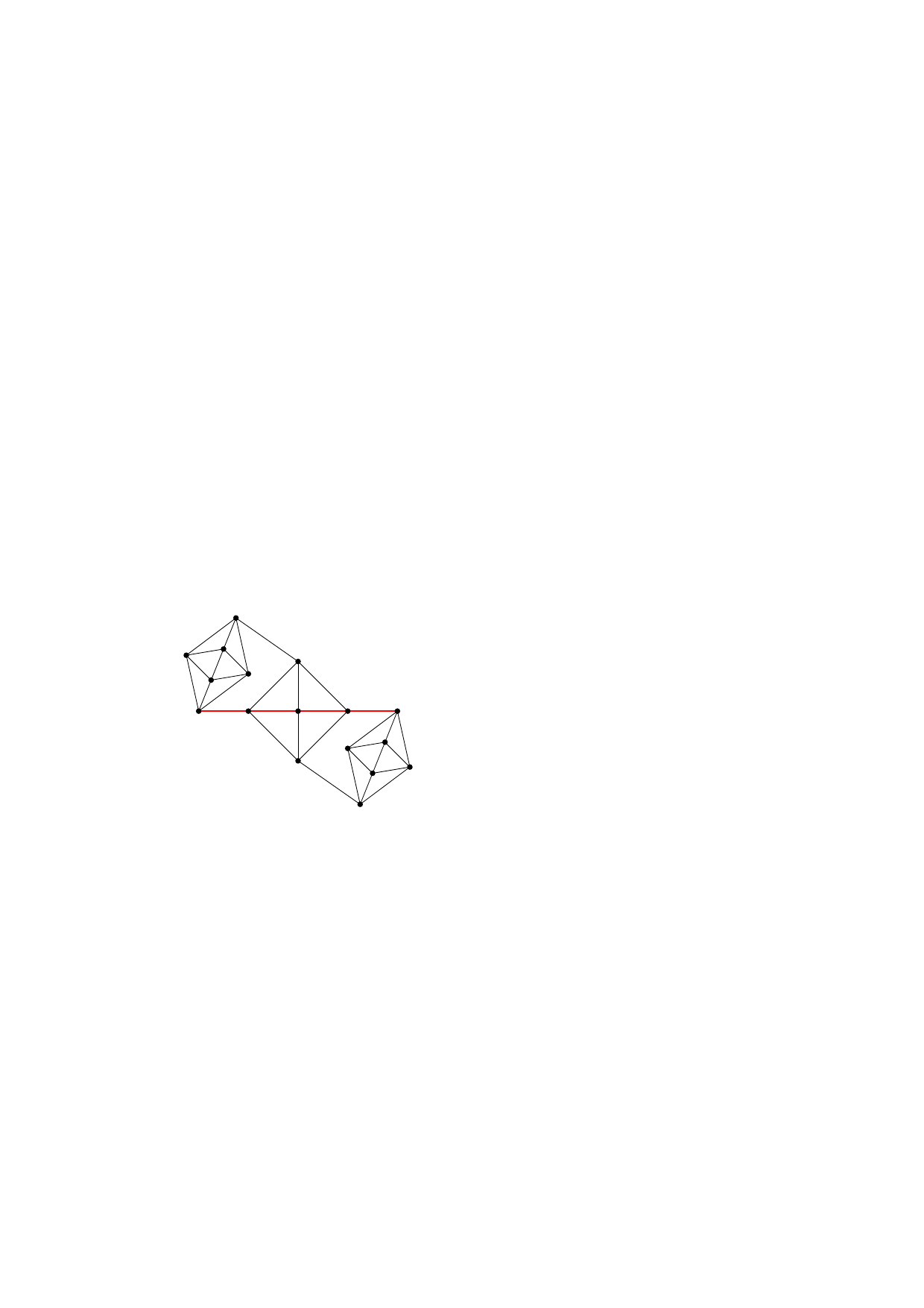}
    \caption{A $4$-regular graph with $\kappa(G)=2$ and a path of length $4$ that does not close to a cycle.}
    \label{fig:24}
\end{figure}

\begin{figure}[!htbp]
    \centering
    \includegraphics[width=0.9\linewidth]{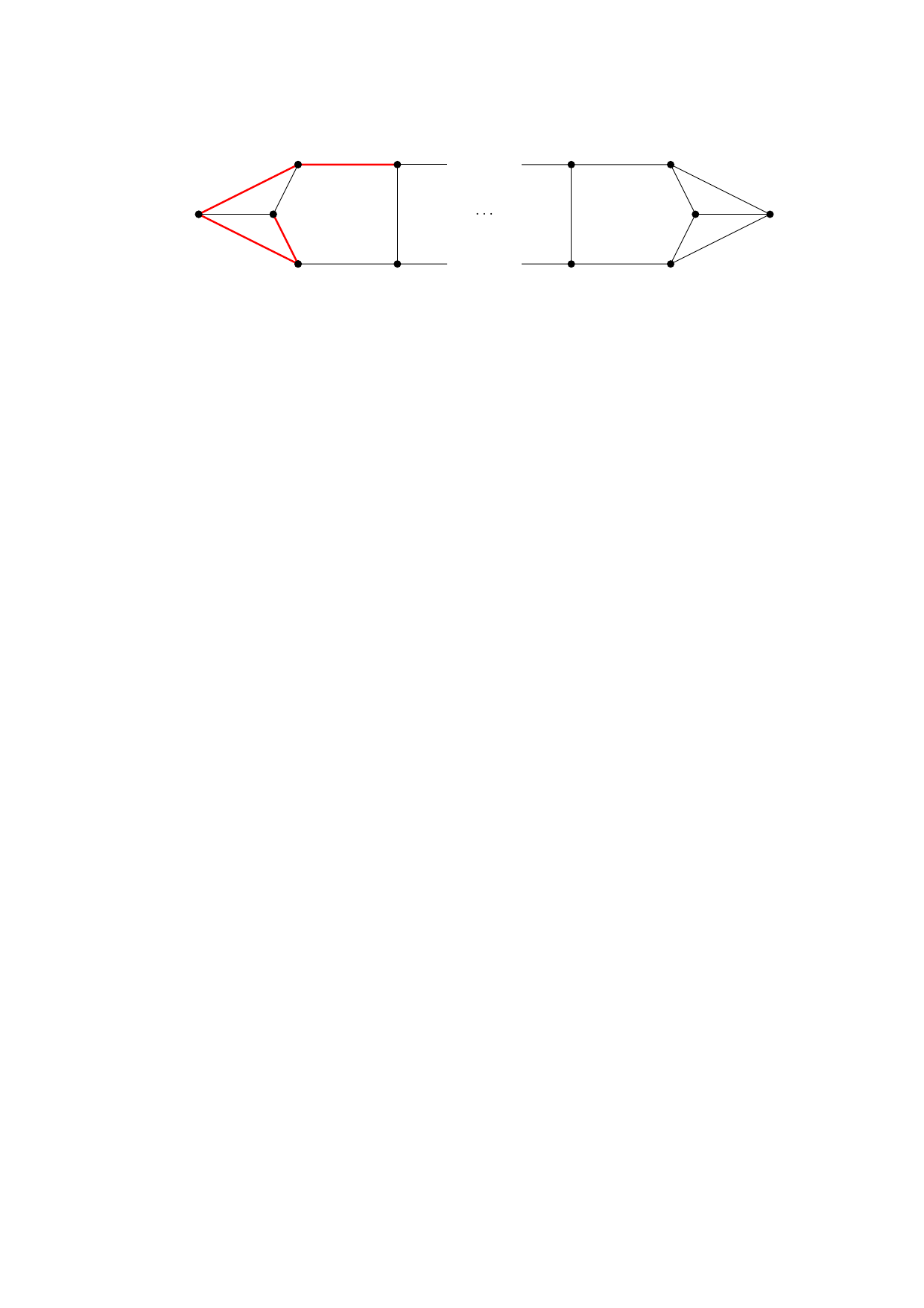}
    \caption{An infinite family of $3$-regular $2$-connected graphs each containing a path of length $4$ that does not close to a cycle.}
    \label{fig:Ladder}
\end{figure}

These observations lead to the following problem.

\begin{prob}\label{PtoCProb_Regular}
    Given a pair of nonnegative integers $(d,k)$ such that $d\geq k$, determine all integers $\ell\geq 0$ with the property that if $G$ is a $d$-regular $k$-connected graph, then every $\ell$-edge path in $G$ closes to a cycle.
\end{prob}

Note that if one defines $\mathcal{G}_{d,k}$ to be the class of $d$-regular $k$-connected graphs, then \cref{PtoCProb_Regular} is just the PtoC problem (\cref{PtoCProb}) stated for the graph class $\mathcal{G}_{d,k}$ for all values $d,k\in \mathbb{N}$ with $d\geq k$. 
Since vertex-transitive graphs are regular, solutions to \cref{PtoCProb_Regular} will generalise \cref{PtoCVT}.

One can also consider the induced version of the same problem (i.e., \cref{IndPtoCProb}) for the graph classes $\mathcal{G}_{d,k}$.

\begin{prob}\label{IndPtoCProb_Regular}
  Given a pair of nonnegative integers $(d,k)$ such that $d\geq k$, determine all integers $\ell\geq 0$ with the property that if $G$ is a $d$-regular $k$-connected graph, then every induced $\ell$-edge path in $G$ closes to an induced cycle.
\end{prob}

It might also be interesting to consider \cref{PtoCProb,IndPtoCProb} for various classes of graphs displaying combinatorial regularity not necessarily arising from group actions, for example for strongly regular graphs.

Note that in \cref{PtoCETnoStars}, given a value $\ell\geq 4$, we were only able to construct a single edge-transitive non-star graph containing an induced $\ell$-edge path not closing to an induced cycle (see \cref{13vCounterExample,QnCounterExample}). 
That is, for each non-solution $\ell\geq 4$ to the \cref{IndPtoCProb} for edge-transitive non-star graphs, we obtained a single counterexample, unlike in \cref{PtoCVT,IndPtoCVT,PtoCETnoStars}, where we constructed infinitely many counterexamples in each cases. 
This bring us to our next challenge.

\begin{prob}\label{MissingCounterExamplesProblem}
    Given an integer $\ell\geq 4$, construct an infinite family of pairwise non-isomorphic edge-transitive non-star graphs each containing an induced $\ell$-edge path not closing to an induced cycle.
\end{prob}

Note that given an $\ell\geq 4$, a solution to \cref{MissingCounterExamplesProblem} can be obtained along the lines of \cref{QnCounterExample}, in particular by using \cref{LineGraphsLemma}, if one can construct an infinite family of $2$-arc-transitive graphs each containing an $\ell$-edge path not closing to a cycle.

\subsubsection*{Acknowledgments} 
This work is supported in part by the Slovenian Research and Innovation Agency (I0-0035, research program P1-0285 and research projects J1-3003, J1-4008, J1-4084, J1-60012, and N1-0370) and by the research program CogniCom (0013103) at the University of Primorska.
Part of this research was done during the second author's visit to the University of Primorska, also supported by New Zealand Marsden Fund grant UOA-2122 and University of Auckland's DRDF.

\bibliography{References}{}

\providecommand{\bysame}{\leavevmode\hbox to3em{\hrulefill}\thinspace}
\providecommand{\MR}{\relax\ifhmode\unskip\space\fi MR }
% \MRhref is called by the amsart/book/proc definition of \MR.
\providecommand{\MRhref}[2]{%
  \href{http://www.ams.org/mathscinet-getitem?mr=#1}{#2}
}
\providecommand{\href}[2]{#2}
\begin{thebibliography}{10}

\bibitem{BCGMS2022}
Jesse Beisegel, Maria Chudnovsky, Vladimir Gurvich, Martin Milani{\v{c}}, and
  Mary Servatius, \emph{Avoidable vertices and edges in graphs: Existence,
  characterization, and applications}, Discrete Applied Mathematics
  \textbf{309} (2022), 285--300.

\bibitem{MR4245221}
Marthe Bonamy, Oscar Defrain, Meike Hatzel, and Jocelyn Thiebaut,
  \emph{Avoidable paths in graphs}, Electron. J. Combin. \textbf{27} (2020),
  no.~4, Paper No. 4.46, 9.

\bibitem{MR234852}
Gary Chartrand and Hudson~V. Kronk, \emph{Randomly traceable graphs}, SIAM J.
  Appl. Math. \textbf{16} (1968), 696--700.

\bibitem{MR335352}
G.~A. Dirac and C.~Thomassen, \emph{Graphs in which every finite path is
  contained in a circuit}, Math. Ann. \textbf{203} (1973), 65--75.

\bibitem{godsil2001algebraic}
Chris Godsil and Gordon Royle, \emph{{Algebraic Graph Theory}}, Graduate Texts
  in Mathematics, vol. 207, Springer-Verlag, New York, 2001.

\bibitem{Gurvich2021Shifting}
Vladimir Gurvich, Matja\v{z} Krnc, Martin Milani\v{c}, and Mikhail Vyalyi,
  \emph{Shifting paths to avoidable ones}, J. Graph Theory \textbf{100} (2022),
  no.~1, 69--83.

\bibitem{MR4904889}
\bysame, \emph{Avoidability beyond paths}, Electron. J. Combin. \textbf{32}
  (2025), no.~2, Paper No. 2.27, 39.

\bibitem{MR289343}
W.~Mader, \emph{\"{U}ber den {Z}usammenhang symmetrischer {G}raphen}, Arch.
  Math. (Basel) \textbf{21} (1970), 331--336.

\bibitem{MR505821}
T.~D. Parsons, \emph{Paths extendable to cycles}, J. Graph Theory \textbf{2}
  (1978), no.~4, 337--339.

\bibitem{MR429637}
K.~B. Reid and C.~Thomassen, \emph{Edge sets contained in circuits}, Israel J.
  Math. \textbf{24} (1976), no.~3-4, 305--319.

\bibitem{MR349484}
Carsten Thomassen, \emph{Graphs in which every path is contained in a
  {H}amilton path}, J. Reine Angew. Math. \textbf{268/269} (1974), 271--282.

\bibitem{Tutte1947_FamilyOfCubicGraphs}
W.~T. Tutte, \emph{A family of cubical graphs}, Proc. Cambridge Philos. Soc.
  \textbf{43} (1947), 459--474. \MR{21678}

\bibitem{MR266804}
Mark~E. Watkins, \emph{Connectivity of transitive graphs}, J. Combinatorial
  Theory \textbf{8} (1970), 23--29.

\end{thebibliography}
\bibliographystyle{amsplain}

\end{document}